\newtheorem{thm}{Theorem}
\newtheorem{prop}{Proposition}
\newtheorem{lm}[prop]{Lemma}
\newtheorem{definition}{Definition}
\def\OO{\mathcal{O}}
\def\N{\mathcal{N}}
\def\C{\mathbb{C}}
\def\PP{\mathbb{P}}
\def\Z{\mathbb{Z}}
\def\T{\mathcal{T}}
\begin{document}
\begin{abstract} Let  $V\cong \C^{d+1}$ be a complex vector space. If one identifies it with a space of binary forms of degree $d$, then one gets an action of $PGL(2)$ on any Grassmannian $Gr(e+1,V)$. We will produce some refined numerical invariants for such an action that stratify the Grassmannian into irreducible and rational invariant strata. Assuming $d\geq 3$, The numerical invariants so obtained are shown to correspond in a simple way with the set of possible splitting types of the restricted tangent bundles of degree $d$ rational curves $C\subset\PP^s$ with $s\leq d-1$. By means of the same techniques we produce explicit parametrizations for the varieties of rational curves with a given  splitting type of the restricted tangent bundle.  \end{abstract}
\title[Rational curves with given restricted tangent bundle]{PGL$(2)$ actions on Grassmannians and projective construction of rational curves with given restricted tangent bundle.}
\author{A. Alzati} 
\author{R. Re}
\address{Alberto Alzati,  Dipartimento di Matematica F.Enriques, Universit\`a di Milano,
via Saldini 50, 20133 Milano (Italy)}
\email{alberto.alzati@unimi.it}
\address{Riccardo Re, Dipartimento di Matematica e Informatica, Universit\`a di Catania, viale Andrea Doria 6, 95125 Catania (Italy)}
\email{riccardo@dmi.unict.it}
\keywords{Rational normal curves,
Kronecker normal form,
tangent bundles
splitting type.}
\subjclass[2010]{14N05,14H60}\noindent
\date{}
\maketitle
\section{Introduction}
A complex Grassmannian $Gr(e+1,V)$, with $V\cong \C^{d+1}$, has a natural class of isomorphic $PGL(2)$ actions on it. Indeed for any identification $V\cong S^{d}U$, with $U\cong \C^2$, one gets the irreducible representation of $\operatorname{Aut}(U)\cong GL(2)$ on $V$ by taking the symmetric powers $S^d(g)$ of automorphisms $g\in \operatorname{Aut}(U)$.  
In this article we study such a $PGL(2)$ action on $Gr(e+1,V)$. Precisely we will obtain a set of numerical invariants of points $[T]\in Gr(e+1,S^dU)$, corresponding to $(e+1)$-dimensional subspaces $T\subset S^dU$, that we call the {\em numerical type} of $T$. These numerical invariants induce a $PGL(2)$-invariant stratification of the Grassmannian that will turn out to have irreducible and rational strata. The classification of subspaces $T\subset S^dU$ according to their numerical type is the object of Theorem \ref{thm:main1}, proved in section 3. The basic idea behind this classification is to consider the $GL(2)$-invariant contraction map $\delta:T\otimes U^\ast\to S^{d-1}U$ and find some canonical form for this map. We will be able to decompose the map $\delta$ in a block form where the building blocks correspond to spaces  of two kinds. A space $T\subseteq S^dU$ is of the first kind if $\PP(T)$ is generated by its schematic intersection with the rational normal curve $C_d=\nu_d(\PP^1)\subset \PP(S^dU)$. 
The map $\delta$ for such a space may be considered as the analogue of a {\em semisimple}  $\C$-vector space endomorphism. A space $T\subset S^dT$ is of the second kind if it is generated by all the $r$-th derivatives $\partial_x^{r-i}\partial_y^i(g)$ of a form $g\in S^{d+r}U$, provided that these derivatives are linearly independent. The map $\delta$ for such a space may be seen as the analogue of a cyclic {\em nilpotent} $\C$-vector space endomorphism.

In section 4 we describe the subvariety $G_\tau$ of $Gr(e+1,S^dU)$ associated to a given numerical type $\tau$, showing that it is always irreducible and rational, as mentioned above. Moreover we give a formula for the dimension of these varieties. This is the content of Theorem \ref{thm:main2}, section 4. 

Finally, in section 5, we give an application of our results to the construction of degree $d$ rational curves $C\subset \PP^{d-e-1}$ with a given splitting type of the restricted tangent bundle. Precisely, one can obtain such a curve, up to projective equivalence, as a projection
of a rational normal curve $C_d\subset \PP(S^dU)$ from a vertex $\PP(T)$, with $T\subseteq S^dU$ and we will relate the splitting type of the restricted tangent bundle $\T_{\PP^{n-e-1}}|_C$ as a vector bundle on $\PP^1$ with the numerical type of $T$ studied in the previous sections. This is stated in Theorem \ref{thm:main3}, section 6 and in the concluding remarks of Section 7.

\section{Notations.}\label{sec:notations} Given a $\C$-vector space $V$, we denote $\PP(V)$ the projective space of $1$-dimensional subspaces of $V$. More generally we denote $Gr(e+1,V)$ or $Gr(e,\PP(V))$ the Grassmannian of $(e+1)$-dimensional subspaces of $V$, or equivalently, of the $e$-dimensional linear subspaces of $\PP(V)$. If $S\subseteq V$ is a $e+1$ dimensional subspace we will denote $[S]$ or $[\PP(S)]$ its associated point in $Gr(e,\PP(V))$. Accordingly, if $v\in V$ is a non-zero vector, we will denote $[v]\in \PP(V)$ its associated point.

Let $U\cong\C^2$ be a two dimensional vector space, $\PP^1=\PP(U)$ its associated projective line and let us fix a basis $x,y\in U$. Let $S^dU$ be the $d$-th symmetric product of $U$. We will denote $\nu_d:\PP(U)\to \PP(S^dU)$ the $d$-th Veronese map defined by $\nu_d(p)=[p^d]$ and $C_d=\nu_d(\PP(U))$ the rational normal curve given as the image of this map, i.e. the set of pure tensors in $S^dU$. For any $r\geq 1$ we denote $Sec^{r-1}C_d$ the closure of the set of $[\tau]\in\PP(S^dU)$ such that  $\tau=p^d_1+\cdots+ p_r^d$, for $[p_i]\in C_d$ distinct points, i.e. the $(r-1)$-th  secant variety of $C_d$. Let $g(x,y)=a_0x^d+\cdots+a_dy^d\in S^dU$ be a binary form of degree $d$ 
and
$\partial_x^{r-i}\partial_y^{i}g(x,y)=A_{i,0}x^{d-r}+\cdots+A_{i,d-r}y^{d-r}$.  Letting $g$ vary in $S^dU$ one may consider $A_{i,j}$ as elements of $S^dU^\ast=H^0(\OO_{\PP(S^dU)}(1))$ and one may define the matrix of $1$-forms $A_r=(A_{i,j})$ with indices $i\in\{0,\ldots,r\}$,  $j\in\{0,\ldots,d-r\}$. 
 In the following proposition we state the well known connection between this matrix and the secant variety $Sec^{r-1}C_d$.
\begin{prop}\label{prop:eqsec} The following facts hold.
\begin{enumerate}
\item One has $\dim Sec^{r-1}C_d=\min(2r-1,d)$ 
\item The $(r+1)\times (r+1)$ minors of the matrix $A_r$ generate the homogeneous ideal of $Sec^{r-1}C_d\subseteq\PP^d$.
\end{enumerate}
\end{prop}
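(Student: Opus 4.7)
The plan is to prove the two parts sequentially, with part (1) providing the dimension count that will be used to verify the set-theoretic equality at the core of part (2).

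For part (1), I would introduce the parametrization
\[
\Phi: C_d^r \times \PP^{r-1} \dashrightarrow Sec^{r-1}C_d, \qquad \bigl((p_1,\ldots,p_r),[\lambda_1:\cdots:\lambda_r]\bigr)\mapsto \Bigl[\textstyle\sum_k \lambda_k p_k^d\Bigr],
\]
whose image is dense by definition. The domain has dimension $2r-1$, giving the upper bound $\dim Sec^{r-1}C_d\le \min(2r-1,d)$. For the reverse inequality, I would compute $d\Phi$ at a generic point: the tangent space to $Sec^{r-1}C_d$ at $[\sum\lambda_k p_k^d]$ contains the $2r$-dimensional span of $\{p_k^d, p_k^{d-1}q_k : k=1,\ldots,r\}$ (with $q_k$ any vector not proportional to $p_k$), and these vectors are linearly independent for generic choice of the $p_k$ since they correspond to interpolation conditions of order $2$ at $r$ distinct points of $\PP^1$, which are independent as long as $2r\le d+1$. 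This yields the non-defectivity and produces the claimed dimension, equivalent to Sylvester's theorem on binary forms.

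For the set-theoretic direction of part (2), observe that for $g=\sum_{k=1}^r \lambda_k p_k^d$ the derivatives $\partial_x^{r-i}\partial_y^{i}g$ are scalar combinations of $p_1^{d-r},\ldots,p_r^{d-r}$, so every row of $A_r(g)$, read as an element of $S^{d-r}U$, lies in an $r$-dimensional space; thus $\mathrm{rk}\,A_r(g)\le r$ and the $(r+1)\times(r+1)$ minors vanish on $Sec^{r-1}C_d$. Let $X_r\subseteq\PP^d$ be the determinantal locus defined by these minors. The matrix $A_r$ has entries $A_{i,j}$ depending only on $i+j$ (it is a catalecticant), which makes it $1$-generic in the sense of Eisenbud; therefore $X_r$ is irreducible of the expected codimension $(r+1-r)(d-r+1-r)=d-2r+1$, i.e.\ of dimension $2r-1$. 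Combined with part (1) and the inclusion $Sec^{r-1}C_d\subseteq X_r$, this forces $X_r=Sec^{r-1}C_d$ as sets when $2r-1<d$, while the case $2r-1\geq d$ is trivial because the matrix then has at most $d-r+1\leq r$ rows or columns, so all $(r+1)\times(r+1)$ minors are identically zero.

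To upgrade set-theoretic equality to the ideal-theoretic statement, I would invoke Eisenbud's theorem that the ideal of maximal minors of a $1$-generic matrix of linear forms is prime and defines an arithmetically Cohen-Macaulay variety. Applied to the catalecticant $A_r$, this shows directly that the ideal generated by the $(r+1)\times(r+1)$ minors is already prime, so it must coincide with the homogeneous ideal of $Sec^{r-1}C_d$ without any need to take radicals. The main obstacle is this last step: the set-theoretic equality and the dimension count are elementary, but passing from the radical ideal to the minors themselves is precisely where the $1$-genericity of the catalecticant is essential; without Eisenbud's theorem one would at best get the equality of ideals after saturating or taking radicals, and the cleanest way to avoid an ad hoc argument is to cite that theory directly.
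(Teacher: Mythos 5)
Your proposal is correct, but there is nothing in the paper to compare it against: the authors state Proposition \ref{prop:eqsec} as a classical fact and refer to \cite{Geram} for a proof, so your write-up is a reconstruction of the standard argument rather than an alternative to an in-paper one. As such it holds up. Part (1) via the parametrization from $C_d^r\times\PP^{r-1}$ together with the Terracini-type identification of the tangent space with the span of the embedded tangent lines $\langle p_k^d,p_k^{d-1}q_k\rangle$ is exactly Sylvester's non-defectivity statement for binary forms, and your independence criterion $2r\le d+1$ is the right one, since the annihilator of that span in $S^dU^\ast$ is the space of degree $d$ forms divisible by the square of each of the $r$ dual linear forms. For part (2), the one step that deserves an explicit sentence is the claim that $A_r$ ``is a catalecticant'': literally one has $A_{i,j}=c_{i,j}a_{i+j}$ with $c_{i,j}$ a nonzero constant depending on both indices, but these constants factor as $u_iv_jw_{i+j}$, so $A_r$ is obtained from the genuine Hankel matrix $(a_{i+j})$ by invertible row and column scalings and a diagonal coordinate change on $\PP^d$, and this preserves $1$-genericity (which for the Hankel matrix reduces to the fact that $u(t)v(t)\neq 0$ in $\C[t]$ for nonzero $u,v$). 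Granting that, Eisenbud's theorem on maximal minors of $1$-generic matrices of linear forms does give primality and the expected codimension $d-2r+1$, your dimension count from part (1) then forces the set-theoretic equality of two irreducible varieties of the same dimension, and primality upgrades this to equality of homogeneous ideals because a prime ideal equals its radical; the degenerate case $2r-1\ge d$ is correctly dispatched since the matrix admits no $(r+1)\times(r+1)$ submatrices and the secant variety fills $\PP^d$. In short, the proof is complete modulo the two standard external inputs you name (Terracini/Sylvester and Eisenbud), which is the same level of reliance on the literature as the paper's own citation.
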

A proof of these facts can be found for example in \cite{Geram}. 
An equivalent way to formulate the property (2) above is the following. Let us denote \begin{equation*}\Phi_r:\PP^d\dashrightarrow  \PP(\wedge^{r+1}S^dU^\ast)\end{equation*} the map defined by \begin{equation}\label{eq:Phie}\Phi_r(g)=[\partial_x^{r}g\wedge\partial_x^{r-1}\partial_yg\wedge\cdots\wedge \partial_y^rg].\end{equation} Then this map has base locus equal to $Sec^{r-1}C_d$.

\paragraph{} Let us consider a subspace $T\subseteq S^dU$, with $d\geq 2$. Observe that if $T$ does not contain pure tensors $p^d$ then $T$ is at least $2$-codimensional in $S^dU$, so $\dim T\leq d-1$.  
We introduce the contraction  map \begin{equation}\label{eq:defdelta}\delta:S^dU\otimes U^\ast\to S^{d-1}U\end{equation} and define $$\partial T=\delta(T\otimes U^\ast),$$ in particular the definition of $\partial T$ does not depend on the choice of the basis $x,y$. Note that one can identify $U^\ast=\langle\partial_x,\partial_y\rangle$ and the action $\delta$ with the action by derivation. One has also 
\begin{equation}\label{eq:deltaT}\partial T=\sum_{D\in U^\ast} DT.\end{equation}
From the definition of $\partial T=\delta(T\otimes U^\ast)$, one sees also that $\partial T=DT+ET$ with $D$ and $E$ any two linearly independent operators.
We also define a space $\partial^{-1}T\subset S^{d+1}U$ in the following way.
\begin{equation}\label{eq:delta-1T}\partial^{-1} T=\bigcap_{D\in U^\ast}D^{-1}T.\end{equation}
 Note that if $D,E\in U^\ast$ are linearly independent, then 
\begin{equation}\label{eq:delta-1formula}\partial^{-1} T=D^{-1}T\cap E^{-1}T.\end{equation}
Note that one always has 
\begin{equation}\label{eq:delta-1incl} \partial(\partial^{-1}T)\subseteq T\mbox{ and }T\subseteq \partial^{-1}(\partial T). \end{equation}
For $g\in S^{d+r}U$ we introduce the linear system \begin{equation}\label{eq:deltaetau}\partial^r(g)=\langle\partial_x^rg,\partial_x^{r-1}\partial_yg,\ldots,\partial_y^rg\rangle\subseteq S^dU.\end{equation} As a convention we set $\partial^r(g)=0$ if $r=-1$.
\section{The numerical type of a subspace $T\subseteq S^dU$.}
The object of this section is to prove the following general classification result of subspaces $T\subseteq S^dU$ according to a block decomposition of the linear map $\delta$.
\vskip2mm 
\paragraph{\em Notation.} Given a subspace $T\subseteq S^dU$, we denote $S=S_T$ the smallest subspace containing the schematic intersection
$\PP(T)\cap C_d$ as a subscheme. We set $a=\dim S-1=\dim \PP(S)$, with the convention that $\dim\emptyset=-1$. 
\begin{definition}\label{def:secant} We will say that a linear space $\PP(S)\subseteq\PP^d$ is $C_d$-generated  if
 $\PP(S)$ is generated by its schematic intersection with $C_d$. Setting  $a+1=\dim S$, we will also say in this case that $\PP(S)$ is $a+1$-secant to $C_d$.  We will say that a vector subspace $S\subseteq S^d U$ is $C_d$-generated if $\PP(S)$ is $C_d$-generated. \end{definition}
\begin{thm}\label{thm:main1} Let $T$ be a proper subspace $T\subseteq S^dU$. Let $S=S_T$ as defined above. Then $\dim \partial S=\dim S$. Moreover if we define $r = \dim\partial T - \dim(T)$ then $r\geq 0$ and either $r=0$ and in this case one has $T=S$, or $r\geq 1$ and there exist forms $ f_1,\ldots,f_r$, with $f_i\in \PP^{d+b_i}\setminus Sec^{b_i}C_{d+b_i}$ for $i=1,\ldots,r$, such that $T$ and $\partial T$ are the direct sums \[\begin{array}{l}T=S\oplus\partial^{b_1}(f_1)\oplus\cdots\oplus\partial^{b_r}(f_r)\\

\partial T=\partial S\oplus\partial^{b_1+1}(f_1)\oplus\cdots\oplus\partial^{b_r+1}(f_r).\end{array}\]
The $(r+1)$-uple $(a,b_1,\ldots,b_r)$ with $b_1\geq \cdots\geq b_r$ is uniquely determined from $T$. A space $T$ as above exists if and only if $a\geq -1$, $b_i\geq 0$ for all $i=1,\ldots,r$ and $a+1+\sum (b_i+2)\leq d.$ \end{thm}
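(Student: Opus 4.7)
The plan is to break the theorem into three components: (a) analyze the ``semisimple'' part $S=S_T$ and establish $\dim\partial S=\dim S$; (b) extract the cyclic nilpotent blocks $\partial^{b_i}(f_i)$ from a complement of $S$ in $T$ by induction on $\dim T-\dim S$; (c) verify uniqueness of the numerical type and the existence criterion. The guiding principle is that the pair $(\partial_x|_T,\partial_y|_T):T\rightrightarrows S^{d-1}U$ is a matrix pencil, and the statement is the $GL(2)$-equivariant refinement of its Kronecker--Weierstrass canonical form, with $S$ playing the role of the regular semisimple summand and each $\partial^{b_i}(f_i)$ of a cyclic singular block.

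For (a), since $T$ is proper and $d\geq 2$ the intersection $\PP(T)\cap C_d$ is zero-dimensional, hence decomposes as $\bigsqcup_i\xi_i$ with $\xi_i$ a length-$m_i$ subscheme of $C_d$ supported at a single point $[p_i^d]$ for pairwise non-proportional $p_1,\ldots,p_k\in U$. The smallest linear space containing $\xi_i$ is the $(m_i-1)$-osculating space to $C_d$ at $[p_i^d]$, whose underlying vector space is $S_i=\langle p_i^{d-j}q_i^j : 0\leq j\leq m_i-1\rangle$ for any $q_i$ independent of $p_i$. A direct computation using $\partial_x,\partial_y$ on these monomials gives $\partial S_i=\langle p_i^{(d-1)-j}q_i^j : 0\leq j\leq m_i-1\rangle$, of dimension $m_i$, and $\partial S=\bigoplus_i\partial S_i$ is a direct sum because the points $[p_i^{d-1}]\in C_{d-1}$ are still pairwise distinct. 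Hence $S=\bigoplus_i S_i$ satisfies $\dim\partial S=\dim S=a+1$.

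For (b), the base case $T=S$ gives $\partial T=\partial S$ and $r=0$. For the inductive step, assume $T\supsetneq S$ and let $b^{*}\geq 0$ be the largest integer for which there exists $f\in S^{d+b^{*}}U\setminus Sec^{b^{*}}C_{d+b^{*}}$ with $\partial^{b^{*}}(f)\subseteq T$ and $\partial^{b^{*}}(f)\not\subseteq S$. The crux is to show that such a $b^{*}$ is well defined and that $N:=\partial^{b^{*}}(f)$ can be split off as a direct summand: maximality of $b^{*}$ prevents any further $\partial^{-1}$-preimage of $N$ from sitting in $T$, and combined with the minor characterization of $C_d$-generated subspaces furnished by Proposition~\ref{prop:eqsec} this forces $N\cap S=0$. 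Choosing a linear complement $T''\supseteq S$ of $N$ in $T$, one then verifies $\partial T=\partial T''\oplus\partial^{b^{*}+1}(f)$, where the direct sum is again a consequence of the maximality of $b^{*}$; this gives $\dim\partial T''-\dim T''=r-1$ and unlocks the induction on $T''$. Throughout, the non-degeneracy $[f_i]\notin Sec^{b_i}C_{d+b_i}$ is equivalent, via Proposition~\ref{prop:eqsec}(2) applied to the matrix $A_{b_i+1}$, to $\dim\partial^{b_i+1}(f_i)=b_i+2$, which is what is needed to read off $\dim\partial T=(a+1)+\sum(b_i+2)$ from the block decomposition.

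For (c), uniqueness of $a=\dim S_T-1$ is automatic; the multiset $\{b_i\}$ is recovered from natural dimension counts attached to the iterated $\partial^{-k}$-preimages of $T$ in $S^{d+k}U$, to which each cyclic block contributes in a pattern determined only by its length. For existence, given $(a,b_1,\ldots,b_r)$ satisfying $a\geq -1$, $b_i\geq 0$ and $a+1+\sum(b_i+2)\leq d$, take $S$ to be (say) the osculating $a$-space at $[x^d]$ and pick generic $f_1,\ldots,f_r$ in the respective $S^{d+b_i}U$; the displayed inequality is precisely $\dim\partial T\leq\dim S^{d-1}U=d$, so the configuration fits inside $S^{d-1}U$, while genericity secures both the non-degeneracy $[f_i]\notin Sec^{b_i}C_{d+b_i}$ and the required direct-sum decompositions of $T$ and $\partial T$. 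The main obstacle I anticipate is the inductive extraction of cyclic blocks in step (b): controlling the interaction between the rigid semisimple part $S$ and the more flexible cyclic summands $\partial^{b_i}(f_i)$, and propagating the direct-sum structure from $T$ to $\partial T$, is exactly where the fine numerical content of the pencil $\delta$ must be used.
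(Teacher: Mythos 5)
Your step (b) is where the real content of the theorem sits, and as written it has a genuine gap. The paper does not attempt this extraction by hand: it invokes the Kronecker normal form of the pencil $\lambda\partial_x+\mu\partial_y:T\to\partial T$ (Gantmacher) as a black box to produce the block decomposition all at once, and then only has to rule out the unwanted block types via Proposition~\ref{prop:dimT-1} and identify the regular blocks with $C_d$-generated spaces (Proposition~\ref{prop:secant}) and the singular blocks with spaces $\partial^{b}(f)$, $[f]\notin Sec^{b}C_{d+b}$ (Proposition~\ref{thm:derivsys}). You are in effect re-proving the Kronecker theorem in this setting, and the two assertions you delegate to ``maximality of $b^{*}$'' are exactly where the work lies. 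The second of them is false as stated: it is not true that $\partial T=\partial T''\oplus\partial^{b^{*}+1}(f)$ for an \emph{arbitrary} linear complement $T''\supseteq S$ of $N=\partial^{b^{*}}(f)$ in $T$. For $d\geq 6$ take $T=\partial^{1}(g_1)\oplus\partial^{1}(g_2)$ of type $(-1,1,1)$, so $S=0$, $b^{*}=1$, $N=\partial^{1}(g_1)$, and choose the complement $T''=\langle\partial_xg_2+\partial_yg_1,\ \partial_yg_2\rangle$. Then $\partial T''$ contains $\partial_y(\partial_xg_2+\partial_yg_1)-\partial_x(\partial_yg_2)=\partial_y^{2}g_1\in\partial^{2}(g_1)$, so $\partial T''\cap\partial^{2}(g_1)\neq 0$ and the sum is not direct. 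The complement must be chosen compatibly with \emph{both} derivations simultaneously, and constructing such a compatible complement is precisely the nontrivial step you have not carried out; likewise no actual argument is given for $N\cap S=0$ (it is unclear how Proposition~\ref{prop:eqsec} would force it). You flag this difficulty yourself in your closing sentence, but flagging it does not discharge it.

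The remaining parts are essentially sound, and in places take a route different from (and arguably cleaner than) the paper's: your direct osculating-space computation of $\partial S$ replaces the implication $(1\Rightarrow 3)$ of Proposition~\ref{prop:secant}, and recovering the multiset $\{b_i\}$ from the dimensions of the iterated preimages $\partial^{-k}T$ is a nice uniqueness argument (the paper instead appeals to uniqueness of the Kronecker block sizes, and only later, in Proposition~\ref{prop:d-1}, proves the $\partial^{-k}$ formula you would need here). For existence, however, ``genericity secures the required direct-sum decompositions'' is not by itself a proof: semicontinuity tells you the generic $\dim\partial T$ equals the maximal value only after you exhibit one configuration achieving $a+1+\sum(b_i+2)$, which is exactly why the paper writes down the explicit monomial example in its Step 7. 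To repair the argument you should either supply the compatible-complement construction (i.e.\ reprove the relevant case of the Kronecker normal form) or simply cite that theorem, and add an explicit witness for the existence claim.
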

\begin{definition}\label{def:type} We say that a subspace $T$ as in Theorem  \ref{thm:main1} has {\em numerical type} $(a,b_1,\ldots,b_r)$. \end{definition}
We divide the proof in steps. The first step shows how to algebraically determine the numerical type of $T$.
\vskip2mm
\paragraph{\bf Step 1.} In order to show the existence of a decomposition of $T$ as stated in Theorem \ref{thm:main1} we will apply the Kronecker normal form of a pencil of linear maps, see \cite{Gant}, chapter XII.
The Kronecker normal form of a pencil of linear maps $\Theta(\lambda,\mu)=\lambda f+\mu g:\C^n\to\C^m$ consists in a decomposition \begin{eqnarray*}\C^n\cong A\oplus B'_1\oplus\cdots\oplus B'_r\oplus C_1''\oplus\cdots\oplus C_s''\oplus \C^\alpha\\
\C^m\cong A'\oplus B_1''\oplus\cdots\oplus B_r''\oplus C_1'\oplus\cdots\oplus C_s'\oplus \C^\beta\end{eqnarray*} with $\dim A=\dim A'$, $\dim B_i''=\dim B'_i+1$ and $\dim C''_j=\dim C_j'+1$ for any $i,j$ and a decomposition of the pencil  $\Theta(\lambda,\mu)$ into a direct sum of pencils of linear maps \begin{eqnarray*}
\Theta(\lambda,\mu)|_{\C^\alpha}=0\\
\Theta_A(\lambda,\mu):A\to A'\\
\Theta_{B_i}(\lambda,\mu):B_i'\to B_i''\\
\Theta_{C_j}(\lambda,\mu):C_j''\to C_j' \end{eqnarray*} such that $\Theta_A(\lambda,\mu)$ is a isomorphism for general $(\lambda:\mu)\in \PP^1$, 
$\Theta_{B_i}(\lambda,\mu)$ are represented by matrices of type $(\dim B'_i+1)\times\dim B'_i$ of the form \begin{equation}\label{eq:Bblockmatrix}M=\left(\begin{array}{llll}\lambda&0&\cdots&0\\
\mu&\lambda&\ddots&0\\
0&\ddots& \ddots&0\\
0&&\mu&\lambda\\
0& &0&\mu\end{array}\right)\end{equation}  and $\Theta_{C_j}(\lambda,\mu)$ are represented by  matrices of the form $M^t$. Note that $\Theta_{B_i}(\lambda,\mu)$ are injective for any $(\lambda:\mu)\in \PP^1$ and $\sum \operatorname{Im}\Theta_{B_i}(\lambda,\mu)=B_i''$. Moreover $\Theta_{C_j}(\lambda,\mu)$ have $1$-dimensional kernel for any $(\lambda:\mu)\in \PP^1$.

We will consider the Kronecker normal form of the pencil $$\lambda\partial_x+\mu\partial_y: T\to \partial T.$$
We can immediately exclude the existence of the $\C^\alpha$ and $\C^\beta$ summands, since the subspace $\C^\alpha$ is contained in the kernel of any $D=\lambda\partial_x+\mu\partial_y$ and hence it is zero, and $\C^\beta$ is embedded in the cokernel of any $D$, but we know that the spaces  $DT$ generate $\partial T$. 

Moreover we can exclude the presence of the``$C$" summands in the case when $T$ is a proper subspace of $S^dU$, because of the following proposition.
\begin{prop}\label{prop:dimT-1} For any non zero $T$ one has $\dim\partial T\geq \dim T-1$. The only subspace $T\subseteq S^dU$ such that $\dim\partial T=\dim T-1$ is $T=S^dU$. \end{prop}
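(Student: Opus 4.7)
The plan hinges on a single elementary fact: for every non-zero $D\in U^\ast$, the contraction $D:S^dU\to S^{d-1}U$ is surjective with exactly $1$-dimensional kernel. Explicitly, writing $D=\lambda\partial_x+\mu\partial_y$, a direct computation shows that $(\mu x-\lambda y)^d$ spans $\ker D$. Consequently the kernels of the various non-zero $D$ are precisely the $1$-dimensional subspaces corresponding to the points of the rational normal curve $C_d\subset \PP(S^dU)$.

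With this in hand, the first inequality is immediate: pick any non-zero $D\in U^\ast$. The restriction $D|_T:T\to S^{d-1}U$ has kernel $T\cap \ker D$ of dimension at most $1$, so $\dim DT\geq \dim T-1$, and since $DT\subseteq \partial T$ we obtain $\dim \partial T\geq \dim T-1$.

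For the equality case I would argue as follows. Suppose $\dim \partial T=\dim T-1$. For \emph{every} non-zero $D\in U^\ast$ we still have $DT\subseteq \partial T$ and $\dim DT\geq \dim T-1=\dim \partial T$, which forces $DT=\partial T$ and $\dim\ker(D|_T)=1$. Since $\ker D\subseteq S^dU$ is itself $1$-dimensional, it follows that $\ker D\subseteq T$. As $D$ ranges over $U^\ast\setminus\{0\}$, the lines $\ker D$ sweep out the affine cone over the whole of $C_d$; because $C_d$ is non-degenerate in $\PP(S^dU)$, these lines span $S^dU$, and hence $T=S^dU$. The converse is trivial since $\partial(S^dU)=S^{d-1}U$ realizes equality.

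The argument presents essentially no obstacle. The only mildly delicate point is the identification of $\ker D$ with a point of $C_d$, but this reduces to the explicit formula $\ker D=\langle(\mu x-\lambda y)^d\rangle$ together with the observation that as $[D]$ varies over $\PP(U^\ast)\cong\PP^1$ the annihilated linear form $\mu x-\lambda y$ varies over all of $\PP(U)$. Once this duality is recorded, the rest is completely formal.
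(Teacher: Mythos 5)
Your proof is correct and follows essentially the same route as the paper: both arguments rest on the fact that each nonzero $D\in U^\ast$ has one-dimensional kernel spanned by a pure power $p^d$, deduce the inequality from $\dim\ker(D|_T)\leq 1$, and in the equality case conclude that $p^d\in T$ for every $p\in U$, whence $T=S^dU$ since the pure powers span $S^dU$.
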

\begin{proof} Note that for any non zero $D\in U$ one can write $D$ as the derivation $\lambda \partial_x+\mu\partial_y$ and one has $\ker D=(p^d)$ with $p=-\mu x+\lambda y\in\PP(U)$, i.e. with $(D)=p^\perp\subseteq U^\ast$. One has $\partial T\supset DT$ and $\ker D|_T$ at most $1$-dimensional, so $\dim \partial T\geq \dim T-1$. 

If $\dim \partial T=\dim T-1$, then for any $D\in U^\ast$ one has $\ker D|_T\not=0$, hence $p^d\in T$ for all $p\in U$, hence $T=S^dU$.
\end{proof}
In the following we will characterize the summands of $T$ of type $``A"$ and $``B"$.
\paragraph{\bf Step 2.} We classify all non zero subspaces $T\subseteq S^dU$ of type $``A"$, i.e. such that $\dim \partial T=\dim T$. 
\begin{prop}\label{prop:secant}  Let $h$ and $k$ be positive integers. The following facts are equivalent for a proper $(k-1)$-dimensional linear subspace space $\PP(S)\subset \PP(S^{h+k}U)$.
\begin{enumerate} \item $\PP(S)$ is a $k$-secant space to $C_{h+k}$.
\item  $S=\{f\in S^{h+k}U\ |\ H(f)=0\}$ for some non zero $H\in S^kU^\ast$.
\item $\dim\partial S=\dim S$.
 \end{enumerate}\end{prop}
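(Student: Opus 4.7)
The plan is to prove the chain $(1)\Leftrightarrow(2)\Rightarrow(3)\Rightarrow(1)$. For $(1)\Leftrightarrow(2)$ I rely on classical apolarity for binary forms: a non-zero $H\in S^{k}U^{*}$ factors over $\C$ as $H=\prod_{i}L_{i}^{m_{i}}$ with $\sum m_{i}=k$, thus determining a length-$k$ scheme $\operatorname{div}(H)\subset \PP(U)\cong C_{h+k}$. Since $S^{\bullet}U^{*}=\C[\partial_{x},\partial_{y}]$ is a domain, multiplication by $H$ is injective, so dually the contraction $H\colon S^{h+k}U\to S^{h}U$ is surjective with $k$-dimensional kernel. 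After a coordinate change sending each root $p_{i}$ to $[1:0]$ and the corresponding $L_{i}$ to $\partial_{y}$, a direct computation shows that the osculating subspace of length $m_{i}$ to $C_{h+k}$ at $p_{i}$ lies in $\ker H$; summing over $i$ yields a subspace of $\ker H$ of dimension $\sum m_{i}=k$, so equality holds and $\ker H$ is precisely the span of $\operatorname{div}(H)$, exhibiting $S$ as $C_{h+k}$-generated of dimension $k$.

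For $(2)\Rightarrow(3)$, if $S=\ker H$ then $H(\partial_{x}f)=\partial_{x}(Hf)=0$ for every $f\in S$ by commutativity of $\partial_{x},\partial_{y}$ with the constant-coefficient operator $H$, and similarly with $\partial_{y}$. Hence $\partial S\subseteq \ker(H\colon S^{h+k-1}U\to S^{h-1}U)$, a subspace of dimension $k$ by the same surjectivity argument, so $\dim\partial S\leq k$. Proposition~\ref{prop:dimT-1} then gives $\dim\partial S\geq k-1$ with equality only for $S=S^{h+k}U$, which is ruled out by properness; hence $\dim\partial S=k$.

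For $(3)\Rightarrow(1)$, the core step, I first show that the pencil $D=\lambda\partial_{x}+\mu\partial_{y}\colon S\to\partial S$ is generically an isomorphism. The kernel of $D$ on $S^{h+k}U$ is the line spanned by $(-\mu x+\lambda y)^{h+k}$, so the locus of $(\lambda:\mu)\in\PP(U^{*})$ where $\ker D|_{S}\neq 0$ is closed; if it were all of $\PP^{1}$, every pure power would lie in $S$ and so $S=S^{h+k}U$, contradicting properness. After a linear change of coordinates on $U$ I may assume $\partial_{x}\colon S\to\partial S$ is an isomorphism, and set $\phi:=\partial_{x}^{-1}\circ\partial_{y}\in\operatorname{End}(S)$. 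For each eigenvalue $\lambda\in\C$ of $\phi$ with algebraic multiplicity $m_{\lambda}$, the new basis $p:=x+\lambda y$, $q:=y$ of $U$ gives $\partial_{p}=\partial_{x}$ and $\partial_{q}=\partial_{y}-\lambda\partial_{x}$. The identity $\partial_{p}\circ(\phi-\lambda)=\partial_{q}$ on $S$ upgrades by an easy induction, using only the commutation $\partial_{p}\partial_{q}=\partial_{q}\partial_{p}$, to
\[
\partial_{p}^{m}\circ(\phi-\lambda)^{m}=\partial_{q}^{m}\qquad\text{on }S
\]
for every $m\geq 1$. Hence the generalized eigenspace $V_{\lambda}=\ker(\phi-\lambda)^{m_{\lambda}}|_{S}$ is contained in $\ker\partial_{q}^{m_{\lambda}}|_{S^{h+k}U}=\langle p^{h+k},p^{h+k-1}q,\ldots,p^{h+k-m_{\lambda}+1}q^{m_{\lambda}-1}\rangle$, the $m_{\lambda}$-th osculating subspace of $C_{h+k}$ at $[(x+\lambda y)^{h+k}]$. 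Since both subspaces have dimension $m_{\lambda}$ they coincide, and the Jordan decomposition $S=\bigoplus_{\lambda}V_{\lambda}$ presents $\PP(S)$ as the span of a length-$k$ subscheme of $C_{h+k}$, yielding (1).

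The hard part is the inductive identity $\partial_{p}^{m}(\phi-\lambda)^{m}=\partial_{q}^{m}$ on $S$, which is what translates the spectral data of the abstract endomorphism $\phi$ into the concrete osculating flag information along $C_{h+k}$. Everything else reduces to dimension counting, the apolarity pairing, and Proposition~\ref{prop:dimT-1}.
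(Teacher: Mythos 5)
Your proof is correct, and for the substantive implication it takes a genuinely different route from the paper. The step $(2)\Rightarrow(3)$ is identical in both (commute $H$ with $\partial_x,\partial_y$, then invoke Proposition \ref{prop:dimT-1}). For $(1)\Leftrightarrow(2)$ the paper only treats reduced $H=D_1\cdots D_k$ directly and reaches arbitrary $H$ by a limit argument in the Grassmannian; you instead take a general factorization $H=\prod L_i^{m_i}$ head-on and identify $\ker H$ with a direct sum of osculating subspaces, which avoids degeneration entirely --- the one sentence worth adding for the direction $(1)\Rightarrow(2)$ is that the schematic intersection $\PP(S)\cap C_{h+k}$ has length exactly $k$ (at least $k$ because it spans $\PP(S)$, at most $k$ because any length-$(k+1)$ subscheme of $C_{h+k}$ spans a $\PP^{k}$), so that it is $\operatorname{div}(H)$ for some $H\in S^kU^\ast$. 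The real divergence is the last implication: the paper proves $(3)\Rightarrow(2)$ by induction on $k$, locating one degenerate member $L$ of the pencil, passing to $S'=LS$, and peeling off one linear factor of $H$ at a time; you prove $(3)\Rightarrow(1)$ in one stroke by forming $\phi=\partial_x^{-1}\circ\partial_y\in\operatorname{End}(S)$ and showing, via the identity $\partial_p^m\circ(\phi-\lambda)^m=\partial_q^m$ on $S$ (which I checked: the induction uses only $\partial_p(\phi-\lambda)=\partial_q$ and commutativity of $\partial_p,\partial_q$), that the generalized eigenspaces of $\phi$ coincide with osculating subspaces of $C_{h+k}$ by a dimension count. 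This is the Sylvester/multiplication-operator point of view: it buys an explicit recovery of the secant divisor from $S$ --- eigenvalues give the points, algebraic multiplicities give the osculation orders --- rather than just its existence, at the cost of first normalizing the pencil so that $\partial_x|_S$ is invertible (which you do justify: the degeneracy locus is a proper closed subset of $\PP(U^\ast)$, else $S$ would contain all pure powers and equal $S^{h+k}U$). Both arguments are complete; the paper's is shorter and stays on the $H$-side of apolarity throughout.
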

\begin{proof}($1\Rightarrow 2$) If $\PP(S)$ is $k$-secant to a rational normal curve then it is well known that $\dim S=k$ and that $S$ is a limit of secant spaces of the form $S'=\langle p_1^{h+k},\ldots,p_{k}^{h+k}\rangle$ with $[p_i]$ distinct points on $\PP(U)$. Writing $p_i=\lambda_ix+\mu_iy$ let us set $D_i=-\mu_i\partial_x+\lambda_i\partial_y\in U^\ast$ for any $i=1,\ldots,k$. Then it is well known that $S'$ is the annihilator in $S^{h+k}U$ of $H=D_1\ldots D_k$, by the apolarity lemma, see \cite{Geram}. Alternatively one may note that $H:S^{h+k}U\to S^hU$ is surjective and it has a $k$ dimensional kernel containing $S'$ and hence equal to $S'$. Passing to the limit we see that also $S=\{f\in S^{h+k}U\ |\ H(f)=0\}$ for some non zero $H\in S^kU^\ast$. 
\vskip1mm
\paragraph{($2\Rightarrow 3$)} Let $S=\{f\in S^{h+k}U\ |\ H(f)=0\}$ for a given non zero $H\in S^kU^\ast$. Note that $\partial S\subset \{f\in S^{h-1+k}U\ |\ H(f)=0\}$  and this latter space has dimension equal to $k=\dim S$. Since $S\not=S^{h+k}U$ we have that $\dim \partial S=\dim S$. 
\vskip1mm
\paragraph{($3\Rightarrow 2$)}If $\dim \partial S=\dim S$ then there exists some $L\in U^\ast$ with a non zero kernel on $S$. Indeed the maps $L:S\to \partial S$, as $L$ varies in $U^\ast$ are a non-constant family of maps between vector spaces of the same dimension, and therefore one of them must have zero determinant. Take $S'=LS$, then $\dim S'=\dim S-1$. Note also that the kernel of $L$ in $S$ must have the form $q^{h+k}$ for some $q\in U$, hence $q^{h+k-1}\in\partial S$ and it is still annihilated by $L$. Moreover $\partial S'=L\partial S$, so we see that $\dim \partial S'\leq \dim \partial S-1=\dim S'$. Indeed one has $\dim \partial S'=\dim S'$, since the only case with the strict inequality above would be $S'=S^{h+k-1}U$, by Proposition \ref{prop:dimT-1}, and this is excluded by $\dim S'=\dim S-1<\dim S^{h+k}U-1=\dim S^{h+k-1}U$. Then by induction there exists some $H'\in S^{k-1}U^\ast$ with $0=H'S'=(H'L)S$, so the conclusion holds for $H=H'L$. 
\vskip1mm
\paragraph{($2\Rightarrow 1$)} Let $H=D_1\ldots D_k$ a product of pairwise non proportional operators. Let us write $D_i=-\mu_i\partial_x+\lambda_i\partial_y$ and $p_i=\lambda_ix+\mu_iy$ for any $i=1,\ldots,k$. Then one sees easily that
$$\langle p_1^{h+k},\ldots,p_k^{h+k}\rangle=\{f\in S^{h+k}U\ |\ H(f)=0\}.$$ 
Now an arbitrary $H\in S^kU^\ast$ is limit of operators as above and the space $S$ defined as $S=\{f\in S^{h+k}U\ |\ H(f)=0\}$ is a limit of spaces as above, since it has the same dimension $k=\dim \ker (H:S^{h+k}U\to S^{h}U)$ and therefore it defines an element of the same Grassmannian containing the spaces introduced above. Hence $S$ is $k$-secant, according to Definition \ref{def:secant}. 
\end{proof}
 \paragraph{\bf Step 3.} Now we will study the spaces $T$ of type $``B"$ i.e. such that $\dim \partial T=\dim T+1$ and $\PP(T)\cap C_d=\emptyset$. 

\begin{prop}\label{thm:derivsys} Let $T\subseteq S^dU$ be a subspace and set $e=\dim T-1$. Then $T$ does not contain pure tensors $p^d$ and is such that $\dim \partial T=\dim T+1$ if and only if there exists some $g\in S^{d+e}U$ such that $T=\partial^e(g)$  and $[g]\not\in Sec^{e}C_{d+e}$. \end{prop}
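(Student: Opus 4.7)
The plan is to prove the biconditional in two steps, with the forward direction using structural facts about derivatives and the apolarity pairing, and the reverse direction relying on the Kronecker normal form combined with an inductive construction of $g$.

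For the \emph{if} direction, I would assume $T = \partial^e(g)$ with $[g] \notin Sec^e C_{d+e}$. Since $Sec^{e-1}C_{d+e} \subseteq Sec^e C_{d+e}$, the hypothesis also gives $[g]\notin Sec^{e-1}C_{d+e}$. Applying Proposition \ref{prop:eqsec} inside $S^{d+e}U$ then yields $\dim T = \dim \partial^e g = e+1$ and $\dim \partial T = \dim \partial^{e+1}g = e+2$, confirming the dimension condition. To rule out pure tensors in $T$, I would argue by contradiction: if $p^d = H(g)$ for some nonzero $H \in S^eU^*$, then the derivation $D_p \in U^*$ annihilating $p$ satisfies $(D_pH)(g) = D_p(p^d) = 0$, so $g$ is annihilated by the nonzero element $D_pH \in S^{e+1}U^*$, which by the apolarity argument in the proof of Proposition \ref{prop:secant} forces $[g] \in Sec^e C_{d+e}$, a contradiction.

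For the \emph{only if} direction, the strategy is to apply the Kronecker normal form to the pencil $\lambda\partial_x + \mu\partial_y : T \to \partial T$. Proposition \ref{prop:dimT-1} rules out $C$-blocks, since the hypotheses force $T$ to be proper, and $\alpha, \beta$ summands are already excluded as in Step 1. The dimension jump $\dim \partial T - \dim T = 1$ then forces exactly one $B$-block. For the $A$-part $A$, Proposition \ref{prop:secant} describes it as $\{f : H(f) = 0\}$ for some $H \in S^{\dim A}U^*$; factoring $H$ over $\C$ into linear derivations $D_{p_1}\cdots D_{p_k}$, each $p_i^d$ would lie in $A$, so $A \neq 0$ would produce a pure tensor in $T$, contradicting the hypothesis. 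Hence $T$ is a single $B$-block of dimension $e+1$.

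The core remaining task is to realize such a single $B$-block $T$ as $\partial^e g$. I would proceed by induction on $e$. The base case $e = 0$ is trivial (take $g$ to be any generator of $T$). For the inductive step, I would fix a Kronecker basis $(b_0, \ldots, b_e)$ of $T$ with $\partial_x b_i = c_i, \partial_y b_i = c_{i+1}$ and consider the sub-$B$-blocks $T_0 = \langle b_0, \ldots, b_{e-1}\rangle$ and $T_1 = \langle b_1, \ldots, b_e\rangle$, each satisfying the hypothesis at level $e-1$ (no pure tensor by inheritance, $\dim \partial T_j = e+1$ from the Kronecker relations). By the inductive hypothesis, $T_0 = \partial^{e-1}g_0$ and $T_1 = \partial^{e-1}g_1$ for some $g_0, g_1 \in S^{d+e-1}U$. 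A short check on the Kronecker relations shows that the automorphism group of a single-$B$-block Kronecker basis reduces to scalar multiples of the identity, so the scalars in $g_0, g_1$ may be adjusted to make $\partial_x^{e-1-i}\partial_y^i g_0 = b_i$ for $0 \leq i \leq e-1$ and $\partial_x^{e-i}\partial_y^{i-1}g_1 = b_i$ for $1 \leq i \leq e$. Comparing the order-$(e-1)$ partials of $\partial_y g_0$ and $\partial_x g_1$ (both producing $c_1, \ldots, c_e$), and using the injectivity of the full iterated $(e-1)$-th derivative map on $S^{d+e-2}U$, one deduces $\partial_y g_0 = \partial_x g_1$. The standard exactness of $\partial: S^{d+e}U \to S^{d+e-1}U \oplus S^{d+e-1}U$ onto the compatibility subspace then yields $g \in S^{d+e}U$ with $\partial_x g = g_0$, $\partial_y g = g_1$, and one verifies $\partial^e g = \partial^{e-1}g_0 + \partial^{e-1}g_1 = T_0 + T_1 = T$. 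Finally $[g] \notin Sec^e C_{d+e}$ is equivalent to $\dim \partial^{e+1}g = \dim \partial T = e+2$, which is our hypothesis.

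The main obstacle is the rescaling step in the inductive argument: one must invoke the fact that the Kronecker basis of a single $B$-block is unique up to an overall scalar, and use it to coordinate the scales of $g_0$ and $g_1$ so that the integrability condition $\partial_y g_0 = \partial_x g_1$ holds; the rest is a chain of routine verifications once this is in place.
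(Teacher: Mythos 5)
Your proof is correct, but the forward implication takes a genuinely different route from the paper's. The paper never invokes the Kronecker normal form inside this proposition: starting from $S=\partial_xT\cap\partial_yT$ and the preimages $T_x=\partial_x^{-1}(S)\cap T$, $T_y=\partial_y^{-1}(S)\cap T$, it builds in one step an ``antiderivative'' space $T^{(1)}=\partial_y^{-1}T_x\cap\partial_x^{-1}T_y$ with $\partial T^{(1)}=T$, $\dim T^{(1)}=\dim T-1$ and no pure tensors, and iterates down to a line $(g)$; this construction is essentially $\partial^{-1}T$ and is recycled later in Proposition \ref{prop:d-1}. You instead use the Kronecker decomposition (legitimately --- it is an external result, so no circularity with Step 1) to reduce to a single $B$-block, and then run an induction on $e$ through the two sub-blocks $\langle b_0,\ldots,b_{e-1}\rangle$ and $\langle b_1,\ldots,b_e\rangle$, gluing the antiderivatives $g_0,g_1$ via the compatibility $\partial_yg_0=\partial_xg_1$ and the exactness of $S^{d+e}U\to S^{d+e-1}U^{\oplus 2}\to S^{d+e-2}U$. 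The step you flag as the main obstacle does hold: the relations $\partial_xb_i=c_i$, $\partial_yb_i=c_{i+1}$ force any change of Kronecker basis to be Toeplitz, upper and lower triangular, hence scalar, so $g_0$ and $g_1$ can indeed be normalized coherently. What each approach buys: the paper's is shorter and produces the $\partial^{-1}$ operator it needs later; yours is a cleaner structural induction but pays for it with the basis-rigidity lemma. Your backward implication is essentially the paper's (both reduce to Proposition \ref{prop:eqsec}); only the exclusion of pure tensors differs cosmetically, routing through apolarity and Proposition \ref{prop:secant} where the paper just observes that $\partial_qDg=0$ is a linear dependence among the order-$(e+1)$ derivatives.
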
 
\begin{proof} Since $T$ does not contain pure tensors one has $D:T\to \partial T$ injective for any non zero $D\in U^\ast$, in particular $\dim DT=\dim T$ for any $D\not=0$. If $DT$ were independent of $D$, one would have $DT=\partial T$ for any $D\not=0$, contradicting $\dim \partial T>\dim T$. Then $DT$ varies in the projective space of $1$-codimensional spaces of $\partial T$ and since $\dim \partial T=\dim T+1$ we have $\partial T=\bigcup_D DT$.
\paragraph{$(\Rightarrow)$} We  first show that $T$ has necessarily the form $\partial^e(g)$. Let us consider the subspaces $\partial_xT\subseteq\partial T$ and $\partial_yT\subseteq\partial T$ and let us denote $S=\partial_xT\cap\partial_yT$ their intersection. Let us also denote $T_x=\partial_x^{-1}(S)\cap T$ and $T_y=\partial_y^{-1}(S)\cap T$. Since $T$ does not contain pure tensors one has $\dim\partial_xT=\dim\partial_yT=\dim T$ and $\dim S=2\dim T-\dim\partial T=\dim T-1$. Hence one has also $\dim T_x=\dim T_y=\dim T-1$ and there are two alternatives: either $T_x\not=T_y$ and therefore $T=T_x+T_y$, or
$T_x=T_y$. Let us show that this last alternative does not occur. Indeed if $T_x=T_y=S'$, then $\delta: S'\otimes U^\ast\to S$ is well defined and, since $\dim S'=\dim S$, it can be described by a pencil of square matrices. As a consequence there exists some non zero $D\in U^\ast$ such that $D:S'\to S$ is not injective, so there must exist some $p^d\in S'$, which is excluded by the assumptions. Now we are left with $T=T_x+T_y$.  

Consider $T_1=\partial_y^{-1}T_x$ and $T_2= \partial_x^{-1}T_y$. These are subspaces of $S^{d+1}U$ of dimension equal to $\dim T$ and such that $\partial_x\partial_y T_1=\partial_x\partial_yT_2=S$, so also $\partial_x\partial_y(T_1+T_2)=S$. More precisely, we have $T_1+T_2=(\partial_x\partial_y)^{-1}(S)$, since $\ker (\partial_x\partial_y)=\langle x^{d+1},y^{d+1}\rangle\subseteq T_1+T_2$. We have therefore $\dim(T_1+T_2)=\dim S+2=\dim T+1$, hence $\dim (T_1\cap T_2)=\dim T-1$.  
Note that $T_1\cap T_2$ does not contain $x^{d+1}$ or $y^{d+1}$, indeed if, for example, $x^{d+1}\in T_1\cap T_2$ one would have $(d+1)x^d=\partial_x(x^{d+1})\in T_y\subseteq T$, which is impossible. Hence $\partial_x$ and $\partial_y$ are injective on $T_1\cap T_2$. Since $\partial_y (T_1\cap T_2)\subseteq T_x$ and $\partial_x (T_1\cap T_2)\subseteq T_y$, by the equality of dimensions, we have $\partial_y (T_1\cap T_2)= T_x$ and $\partial_x (T_1\cap T_2)=T_y$, so we have found $$\partial(T_1\cap T_2)=T,\mbox{ with } \dim (T_1\cap T_2)=\dim T-1,$$
and moreover $T_1\cap T_2$ not containing any pure tensor $p^{d+1}$, otherwise for a general $D\in U^\ast$ one would have $0\not=Dp^{d+1}=\lambda p^d\in T$.

So, assuming $T$ not containing pure tensors and $\dim\partial T=\dim T+1$, we have found a space $T^{(1)}$ not containing pure tensors, such that $\partial T^{(1)}=T$ and such that  $\dim\partial T^{(1)}=\dim T^{(1)}+1$. 

We can iterate the whole procedure finding spaces $T=T^{(0)}, T^{(1)},\ldots,T^{(e)}$ of dimension $\dim T^{(i)}=\dim T-i$ and such that $\partial T^{(i+1)}=T^{(i)}$ until we obtain $\dim T^{(e)}=1$. So we find $T^{(e)}=(g)\subseteq S^{d+e}$ and $T=\partial^e(g)$. 
\vskip2mm
\noindent
$(\Leftarrow)$ Now we examine the condition under which, for $T=\partial^e(g)$, one has $\dim\partial T=\dim\partial^{e+1}(g)=e+2$, and hence, as we will see, also $\dim T=e+1$. It is clear that $\dim\partial^{e+1}(g)=e+2$ if and only if the partial derivatives $\partial_x^{e+1-i}\partial_y^{i}g$ are independent for $i=0,\ldots,e+1$. This means that the $(e+2)\times d$ matrix $(A_{i,j})$ as in Section \ref{sec:notations} has rank $e+2$. By Proposition \ref{prop:eqsec} this is equivalent to $[g]\not\in Sec^eC_{d+e}$. Note that if $[g]\not\in Sec^eC_{d+e}$ then a fortiori $[g]\not\in Sec^{e-1}C_{d+e}$ and hence $\dim T=e+1$. If there were a point $p^d\in\PP(T)\cap C_d$, then we could find a basis $p,q$ of $U$ so that $\partial_qp=0$ and, since $p^d=Dg\in\partial^e(g)$, we would have also $\partial_qDg=0,$ so $\dim\partial^{e+1}(g)<e+2$, a contradiction. So, automatically, if $[g]\not\in Sec^eC_{d+e}$, then $T=\partial^e(g)$ does not contain pure tensors.
\end{proof}

The result of Proposition \ref{thm:derivsys} can be rephrased by saying that the subvariety of the Grassmannian $Gr(e,\PP(S^dU))$ of subspaces $\PP(T)\subseteq\PP(S^dU)$ such that $\dim \partial T=\dim T+1$ and $\PP(T)\cap C_d=\emptyset$ is the image $\Phi_e(\PP^{d+e}\setminus Sec^{e}C_{d+e})$ by means of the rational map
\begin{equation}\label{eq:Phi}\Phi_e: \PP^{d+e}\dashrightarrow  Gr(e,\PP^d)\subseteq \PP(\wedge^{e+1}S^dU)\end{equation} defined by $\Phi_e(g)=[\partial_x^eg\wedge\cdots\wedge \partial_y^eg]$. 
 \paragraph{\bf Step 5. Uniqueness of the $``A"$ summand.} We need to identify the $``A"$ summand of the Kronecker decomposition with the $C_d$ generated part $S_T$ of $T$, as stated in Theorem \ref{thm:main1}. Of course the $``A"$ summand is contained in $S_T$, since $\dim A=\dim\partial A$ and hence it is a secant space by Proposition \ref{prop:secant}. Assume that $A\subsetneq S_T$. Since $T=A\oplus\partial^{b_1}(f_1)\oplus\cdots\oplus\partial^{b_r}(f_r)$, we could write $S_T=A\oplus W$ with $W\subseteq\partial^{b_1}(f_1)\oplus\cdots\oplus\partial^{b_r}(f_r)$ and $\partial S_T=\partial A\oplus \partial W$. By construction of $W$ we have $D|_W:W\to \partial W$ injective for any $D\not=0$. This implies that $\dim \partial W\geq\dim W+1$, otherwise $W$ would be one of the spaces characterized in Proposition \ref{prop:dimT-1} or Proposition \ref{prop:secant}. This implies $\dim \partial S_T\geq \dim S_T+1$, which is impossible since $S_T$ is a secant space by construction  and then by Proposition \ref{prop:secant} one has $\dim \partial S_T=\dim S_T$.
\paragraph{\bf Step 6. Uniqueness of the numerical type.} The uniqueness of the numerical type $(a,b_1,\ldots,b_r)$ is a consequence of the uniqueness of the set of block sizes in the Kronecker normal form used above. 
 \paragraph{\bf Step 7. Existence. } Assume that $a+1+\sum (b_i+2)\leq d$. Then we construct a space $T$ of type $(a,b_1,\ldots,b_r)$ as follows.
 We consider a subspace $T\subset S^dU$ generated by a set of monomials arranged into $r+1$ separated sequences each made of monomials with consecutive $x$-degrees. A possible choice for such sequences is the following 
 \[\begin{array}{l} A=x^d,x^{d-1}y,\ldots,x^{d-a}y^a\\
 B_1=x^{d-a-2}y^{a+2},\ldots,x^{d-a-2-b_1}y^{a+2+b_1}\\
 \cdots\\
 B_r=x^{d-a-2-\sum_{i=1}^{r-1} (b_i+2)}y^{a+2+\sum_{i=1}^{r-1} (b_i+2)},\ldots,x^{d-a-\sum_{i=1}^{r} (b_i+2)}y^{a+\sum_{i=1}^{r} (b_i+2)}.\end{array}\]
 As we said, we take $T=\langle A,B_1,\ldots,B_r\rangle$. Note that $S=\langle A\rangle$ is a $C_d$-generated space, precisely it is the space generated by the $0$-dimensional subscheme of $C_d$ supported on $p=[x^d]\in C_d$ and of degree $a+1$. Moreover, setting 
 $$f_1=x^{d-a-2}y^{a+b_1+2}\mbox{ and } f_i=x^{d-a-2-\sum_{j=1}^{i-1} (b_j+2)}y^{a+\sum_{j=1}^{i} (b_j+2)} \mbox{ for } i=2,\dots,r,$$  one can see that $\langle B_i\rangle=\partial^{b_i}(f_i)$. Hence one can write $$T= S\oplus \partial^{b_1}(f_1)\oplus\cdots\oplus\partial^{b_r}(f_r).$$ To show that the type of $T$ is $(a,b_1,\cdots,b_r)$ it remains only to show that $$\partial T=\partial S\oplus \partial^{b_1+1}(f_1)\oplus\cdots\oplus\partial^{b_r+1}(f_r).$$ This is a straightforward calculation. Actually in this example one can show that  $\partial(T)$ is generated by the consecutive monomials $$x^{d-1},\ldots,x^{d-1-a-\sum_{i=1}^{r} (b_i+2)}y^{a+\sum_{i=1}^{r} (b_i+2)}$$ and so its dimension is $a+1+\sum_{i=1}^{r} (b_i+2)$ and hence $\dim\partial T=\dim T+r$, which forces the type of $T$ to be exactly $(a,b_1,\ldots,b_r)$. We omit the computational details.

\section{Varieties of subspaces $T\subseteq S^dU$ of given type $(a,b_1,\ldots,b_h)$}
We start with a study of some properties of the operator $\partial^{-1}$ defined by formula (\ref{eq:delta-1T}).
\begin{prop}\label{prop:d-1} Let $T\subseteq S^dU$ be a subspace and hence $\partial^{-1}T\subseteq S^{d+1}U$ and $\partial T\subseteq S^{d-1}U$.  The following properties hold for the operator $T\mapsto \partial^{-1}T$.
\begin{enumerate}
\item[\it(a)] If $T=(f)\not=0$ then $\partial^{-1}T=0$ if $[f]\not\in C_d$, otherwise $T=(p^d)$ and $\partial^{-1}T=(p^{d-1})$.
\item[\it(b)] If $T$ is $C_d$-generated, then $\dim \partial^{-1}T=\dim T$ and $\partial(\partial^{-1}T)=T$. In particular if $T=\langle p_0^d,\ldots,p_e^d\rangle$, then $\partial^{-1}T=\langle p_0^{d+1},\ldots,p_e^{d+1}\rangle$.
\item[\it(c)] If $T=\partial^b(f)$ with $b>0$ and $[f]\not\in Sec^bC_{d+b}$, then $\partial^{-1}T=\partial^{b-1}(f)$ and $\partial(\partial^{-1}T)=T$.
\item[\it(d)] If $T\subseteq S^d$ is a proper subspace of the form $T=A\oplus B$ and $\partial T=\partial A\oplus\partial B$, then $\partial^{-1}T=\partial^{-1}A\oplus\partial^{-1}B$.
\item[\it(e)] If $T$ is a space as in theorem \ref{thm:main1}, that is $T=S\oplus\partial^{b_1}(f_1)\oplus\cdots\oplus\partial^{b_r}(f_r)$ and $\partial T=\partial S\oplus\partial^{b_1+1}(f_1)\oplus\cdots\oplus\partial^{b_r+1}(f_r)$, with $S=S_T$ and $[f_i]\not\in Sec^{b_i}C_{d+b_i}$ for $i=1,\ldots,r$, then
$\partial^{-1}T=\partial^{-1}S\oplus\bigoplus_{b_i\geq 1}\partial^{b_i-1}(f_i).$
\end{enumerate}\end{prop}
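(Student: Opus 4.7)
The plan is to prove the five parts in order, because (d) and (e) are formal consequences of the first three. For (a), I exploit the compatibility relation $\partial_y\partial_xg=\partial_x\partial_yg$: if $\partial_xg=\alpha f$ and $\partial_yg=\beta f$, then $\alpha\,\partial_yf=\beta\,\partial_xf$. Linear dependence of $\partial_xf$ and $\partial_yf$ is equivalent to $f$ being a $d$-th power, so in the case $[f]\notin C_d$ this forces $\alpha=\beta=0$ and hence $g=0$. In the pure tensor case $f=p^d$ a direct monomial computation in coordinates where $p=x$ gives $\partial^{-1}(f)=(p^{d+1})$.

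Part (b) is the technical core. By Proposition~\ref{prop:secant} one writes $T=\ker(H:S^dU\to S^{d-e-1}U)$ for some nonzero $H\in S^{e+1}U^\ast$. The crucial observation is that $H$, being a polynomial in $\partial_x,\partial_y$, commutes with them, so
\[g\in\partial^{-1}T\iff H\partial_xg=H\partial_yg=0\iff \partial_x(Hg)=\partial_y(Hg)=0.\]
Properness of $T$ forces $d-e\geq 1$, so the last condition yields $Hg=0$. Hence $\partial^{-1}T=\ker(H:S^{d+1}U\to S^{d-e}U)$ is itself a $C_{d+1}$-generated space of dimension $e+1$ cut out by the same operator $H$. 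This identifies it at once with the lifted span $\langle p_0^{d+1},\ldots,p_e^{d+1}\rangle$ (in the reduced case via the factorization of $H$, and in general because $H$ is uniquely determined up to scalar), and the equality $\partial(\partial^{-1}T)=T$ then follows from Proposition~\ref{prop:secant}(3) by comparing dimensions.

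For (c), the inclusion $\partial^{b-1}(f)\subseteq\partial^{-1}T$ is immediate from $\partial(\partial^{b-1}(f))=\partial^b(f)=T$, so equality will follow from a dimension match. The hypothesis $[f]\notin Sec^bC_{d+b}$ makes the $(b+1)$-th partial derivatives of $f$ linearly independent, and by the standard propagation argument this forces the $b$-th derivatives to be independent as well, giving $\dim\partial^{b-1}(f)=b$. To get $\dim\partial^{-1}T\leq b$, I use the compatible pair description: the map $g\mapsto(\partial_xg,\partial_yg)$ injects $\partial^{-1}T$ into the kernel of $(h_1,h_2)\mapsto\partial_yh_1-\partial_xh_2$ in $T\oplus T$, and expanding in the basis $\partial_x^{b-i}\partial_y^if$ of $T$ one reads off that this kernel is precisely $b$-dimensional. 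Part (d) is a similar compatibility argument: given $g\in\partial^{-1}T$, one decomposes $\partial_xg,\partial_yg$ along $A\oplus B$, observes that the cross-identity $\partial_y\partial_xg=\partial_x\partial_yg$ splits according to $\partial T=\partial A\oplus\partial B$, and reconstructs $g_A\in\partial^{-1}A$, $g_B\in\partial^{-1}B$ using the standard de~Rham surjection of $S^{d+1}U$ onto the compatible pairs in $S^dU\oplus S^dU$; directness of the sum follows from $\partial^{-1}(0)=0$ in positive degree.

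Finally, (e) is an iterated application of (d) to the given direct sum decomposition of $T$, with (b) producing the $\partial^{-1}S$ contribution, (c) producing the $\partial^{b_i-1}(f_i)$ contributions for $b_i\geq 1$, and (a) killing the $b_i=0$ contributions (where $\partial^{b_i}(f_i)=(f_i)$ with $[f_i]\notin C_d$). The main obstacle is part (b): making the identification with the correct $C_{d+1}$-generated lift uniform across reduced and non-reduced configurations. The commutation identity $H\partial=\partial H$ neatly reduces this to a single kernel-of-operator computation, bypassing any need for delicate limiting arguments.
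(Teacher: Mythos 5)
Your proof is correct, and for the two hardest parts it takes a genuinely different route from the paper, so it is worth comparing. Parts (a), (b) and (e) are close to the paper's: for (b) the paper likewise identifies $T$ with $\operatorname{Ann}(H)$ via Proposition \ref{prop:secant} and exhibits the annihilator of $H$ in $S^{d+1}U$ as a candidate for $\partial^{-1}T$, but it excludes a strictly larger $\partial^{-1}T$ indirectly through Proposition \ref{prop:dimT-1}, whereas your commutation identity $H\partial=\partial H$ yields $\partial^{-1}T=\ker\bigl(H|_{S^{d+1}U}\bigr)$ in one stroke (note that both arguments tacitly use that $T$ is proper, so that $Hg$ has positive degree). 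The real divergence is in (c) and (d). For (c) the paper argues by contradiction: a strictly larger $\partial^{-1}T$ would satisfy $\dim\partial(\partial^{-1}T)\leq\dim\partial^{-1}T$ and hence, by Propositions \ref{prop:dimT-1} and \ref{prop:secant}, would contain pure tensors $p^{d+1}$, forcing $p^d\in T$ against Proposition \ref{thm:derivsys}; you instead bound $\dim\partial^{-1}T$ directly by counting compatible pairs $(\partial_xg,\partial_yg)$ in $T\oplus T$ against the independent $(b+1)$-th derivatives of $f$, a computation that indeed gives exactly $b$ free parameters. For (d) the paper runs a delicate argument with two operators $D,E$, the kernel $\langle p_D^{d+1},p_E^{d+1}\rangle$ of $DE$, and an intersection over varying $E$; your splitting of the compatible pair $(\partial_xg,\partial_yg)$ along $A\oplus B$ followed by the Euler-formula primitive $g_A=\tfrac{1}{d+1}(xu_A+yv_A)$ is considerably shorter and avoids the genericity choices entirely. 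The one step you should spell out is precisely that surjectivity of $S^{d+1}U$ onto compatible pairs (it is immediate from Euler's relation, as just indicated), since it is the sole nontrivial input of your (d); everything else checks out.
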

\begin{proof} 
{\it(a)}. It is clear that $\partial^{-1}(p^d)=(p^{d+1})$. Conversely, if $T=(f)$ then for any $g\in \partial^{-1}T$ one has $\partial_xg$  and $\partial_yg$ proportional to $f$, so they are proportional and this means that $g=p^{d+1}$ for some $p$, hence $f=p^d$. 

\noindent
{\it(b)}. If $T$ is $C_d$-generated, then by Proposition \ref{prop:secant} one has $T=\operatorname{Ann}(D)\subset S^dU$ for some $D\in S^{a+1}U^\ast$, with $a+1=\dim T$. We set $T_1=\operatorname{Ann}(D)\subset S^{d-1}U$ and observe that $\partial T_1=T$, since one has $\partial T_1\subseteq T$ and their dimensions are equal. Hence $T_1\subseteq \partial^{-1}T$. To show that $T_1=\partial^{-1}T$ we observe that otherwise one would have $\dim T= \dim \partial(\partial^{-1}T)=\dim \partial T_1=\dim T_1<\dim\partial^{-1}T$, which would imply $\partial^{-1}T=S^{d+1}U$ by Proposition \ref{prop:dimT-1}, a contradiction.

\noindent
{\it(c)}. Given $T=\partial^b(f)$ with $[f]\not\in Sec^{b}C_{d+b}$, we set $T_1=\partial^{b-1}(f)\subseteq \partial^{-1}T$.  Note that $\partial T_1=T$. Assume that $T_1\subsetneq \partial^{-1}T$. We have in any case $T=\partial T_1\subseteq \partial(\partial^{-1}T)\subseteq T$, so the inclusions are all equalities, hence  $b+1=\dim T= \dim\partial(\partial^{-1}T)=\dim \partial T_1=\dim T_1+1\leq \dim\partial^{-1}T$, hence $\partial^{-1}T$ would be a $C_{d+1}$-generated space by Proposition \ref{prop:secant}. In particular $\partial^{-1}T$ would contain pure tensors $p^{d+1}$, so  $T$ would contain $p^d$, which is excluded by Proposition \ref{thm:derivsys}.

\noindent
{\it(d)}. First of all observe that $\partial^{-1}A\cap\partial^{-1}B=0$, since any $f\in\partial^{-1}A\cap\partial^{-1}B$ would have the property that $Df\in A\cap B=0$ and hence $Df=0$ for any $D$, hence $f=0$. So the sum $\partial^{-1}A+\partial^{-1}B$ is direct and moreover it is trivially contained in $\partial^{-1}(A\oplus B)$. To show the converse inclusion we consider two general operators $D,E\in U^\ast$ and the representation $$\partial^{-1}(A\oplus B)=(D^{-1}A+ D^{-1}B)\cap (E^{-1}A+ E^{-1}B).$$ 
We consider an arbitrary element $f\in \partial^{-1}(A\oplus B)$, which we can represent in two ways $$f=g_A+g_B=h_A+h_B,$$ relatively to the fixed operators $D$ and $E$,  with $Dg_A\in A$, $Dg_B\in B$, $Eh_A\in A$, $Eh_B\in B$.  Then we have $g_A-h_A=h_B-g_B$ and applying $ED$ to both sides we find $$EDg_A-DEh_A=DEh_B-EDg_B\in \partial A\cap\partial B=0,$$ hence  $$DE(g_A-h_A)=DE(h_B-g_B)=0.$$ Since $\ker DE=\langle p_D^{d+1},p_E^{d+1}\rangle$, we find $$g_A-h_A=h_B-g_B=\lambda p_D^{d+1}+\mu p_E^{d+1},$$

In particular, applying $D$ we find $Dh_A\in A+\langle p_E^{d}\rangle$ and  $Dh_B\in B+\langle p_E^{d}\rangle ,$ hence $$h_A\in D^{-1}(A+\langle p_E^{d}\rangle),\quad h_B\in D^{-1}(B+\langle p_E^{d}\rangle).$$  Since $D$ is surjective, one has $D^{-1}(A+\langle p_E^{d}\rangle)=D^{-1}A+D^{-1}\langle p_E^{d}\rangle=D^{-1}A+\langle p_D^{d+1}\rangle+\langle p_E^{d+1}\rangle=D^{-1}A+\langle p_E^{d+1}\rangle$. Similarly one has $D^{-1}(B+\langle p_E^{d}\rangle)=D^{-1}B+\langle p_E^{d+1}\rangle$. Now, since $h_A\in D^{-1}A+\langle p_E^{d+1}\rangle$, we can write $h_A=g+\lambda p_E^{d+1}$ with $\lambda\in\C$ and $g\in D^{-1}A$ and applying $E$ we see $Eg=Eh_A\in A$, so $g\in D^{-1}A\cap E^{-1}A=\partial^{-1}A$. We have found $h_A\in \partial^{-1}A+\langle p_E^{d+1}\rangle$.
Similarly, we have $h_B\in \partial^{-1}B+\langle p_E^{d+1}\rangle$. Since we have $f=h_A+h_B$, and letting $E$ vary, we see that
  $$f\in \bigcap_E (\partial^{-1}A+\partial^{-1}B+\langle p_E^{d+1}\rangle )=\partial^{-1}A+\partial^{-1}B.$$ The last equality holds because the spaces $\partial^{-1}A+\partial^{-1}B+\langle p_E^{d+1}\rangle $ for varying $E$ are not all equal, otherwise they would be all equal to $S^{d+1}U$ and hence applying $E$ one would find $T=A\oplus B=S^dU$, against the assumptions.

\noindent
{\it(e)}. One proves the statement by iterated applications of  {\it(d)} to the decomposition $T=S\oplus\partial^{b_1}(f_1)\oplus\cdots\oplus\partial^{b_r}(f_r)$ and applying {\it(a)}, {\it(b)} and {\it(c)} to the various summands.

\end{proof}
\subsection{The injectivity locus of $\Phi_e$}
Now we discuss the question of the injectivity of $\Phi_e$ defined by the equation (\ref{eq:Phie}). We have seen that $\Phi_e|_{Sec^eC_{d+e}}$ is certainly not injective, indeed it has general fibers equal to the secant $e$-dimensional spaces to $C_{d+e}$. 
We have the following result. 
\begin{prop}\label{thm:injlocus} The map $\Phi_e:\PP^{d+e}\dashrightarrow Gr(e,\PP^{d})$ is a birational embedding off the closed subset $Sec^eC_{d+1}$.  \end{prop}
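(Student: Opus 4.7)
The plan is to construct an algebraic inverse to $\Phi_e$ over the open image $\Phi_e(\PP^{d+e}\setminus Sec^eC_{d+e})$, by iterating the operator $\partial^{-1}$ studied in Proposition \ref{prop:d-1}. Set $\Omega:=\PP^{d+e}\setminus Sec^eC_{d+e}$. For $[g]\in\Omega$, Proposition \ref{thm:derivsys} yields that $T:=\Phi_e(g)=\partial^e(g)$ is an $(e+1)$-dimensional subspace of $S^dU$, containing no pure tensors and with $\dim\partial T=\dim T+1$.

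First I would verify by induction on $i=0,\ldots,e$ that $(\partial^{-1})^iT=\partial^{e-i}(g)$. The inductive step invokes Proposition \ref{prop:d-1}(c) applied to $\partial^{e-i}(g)\subseteq S^{d+i}U$, and requires the non-secancy hypothesis $[g]\not\in Sec^{e-i}C_{d+e}$. Since $Sec^{e-i}C_{d+e}\subseteq Sec^eC_{d+e}$ for $0\leq i\leq e$, the single assumption $[g]\not\in Sec^eC_{d+e}$ is enough at every stage, and after $e$ iterations one obtains $(\partial^{-1})^eT=(g)$. This recovers the line $[g]$ from $T$ and settles the set-theoretic injectivity of $\Phi_e|_\Omega$.

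To promote this pointwise recovery to an algebraic morphism $\Psi:\Phi_e(\Omega)\to\Omega$, observe that for a subspace $W\subseteq S^mU$ the space $\partial^{-1}W$ is the kernel of the linear map $S^{m+1}U\to(S^mU/W)^{\oplus 2}$, $f\mapsto(\partial_xf\bmod W,\partial_yf\bmod W)$. Because the previous step shows that at each stage of the iteration one has $\dim\partial^{-1}W=\dim W-1$ constant, this family of kernels assembles into a morphism to the appropriate Grassmannian, and composing $e$ such morphisms yields $\Psi$. The identities $\Psi\circ\Phi_e=\mathrm{id}_\Omega$ and $\Phi_e\circ\Psi=\mathrm{id}_{\Phi_e(\Omega)}$ then follow from the pointwise computation, so $\Phi_e|_\Omega$ is an isomorphism onto its image in $Gr(e,\PP^d)$. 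The main subtle point—and the potential obstacle—is controlling that the rank of the linear conditions cutting out $\partial^{-1}$ stays constant along the iterated image locus, so as to give a genuine morphism of Grassmannians rather than a merely set-theoretic construction; this is precisely what iterated use of Proposition \ref{prop:d-1}(c) guarantees.
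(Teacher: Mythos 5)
Your first half---recovering $(g)$ from $T=\partial^e(g)$ by $e$ applications of Proposition \ref{prop:d-1}(c), using that $Sec^{e-i}C_{d+e}\subseteq Sec^{e}C_{d+e}$---is exactly the paper's argument for set-theoretic injectivity on $\Omega=\PP^{d+e}\setminus Sec^eC_{d+e}$. Where you genuinely diverge is in upgrading injectivity to an embedding. The paper instead computes the differential $d\Phi_e(\bar h)$ explicitly in Pl\"ucker coordinates, wedges the resulting relation with $\partial_x^eg,\ldots,\partial_y^eg$ to deduce $\partial^e(h)\subseteq\partial^e(g)$, and then reuses $\partial^{-e}\partial^e(g)=(g)$ to conclude $\bar h=0$; that is, it proves ``injective with everywhere injective differential.'' Your route builds an algebraic left inverse $\Psi$ out of iterated kernel bundles, which is a legitimate and in some respects cleaner alternative: a section of a separated morphism is a closed immersion, so once $\Psi$ exists you obtain that $\Phi_e|_\Omega$ is an isomorphism onto a locally closed subvariety, and the injectivity of $d\Phi_e$ falls out as a corollary of $d\Psi\circ d\Phi_e=\mathrm{id}$, whereas the paper's direct computation gives the immersion property but leans on the standard (unstated) passage from injective immersion to embedding. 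The one point you should tighten is the domain of $\Psi$: the image $\Phi_e(\Omega)$ is a priori only a constructible subset of $Gr(e,\PP^d)$, so rather than defining $\Psi$ on $\Phi_e(\Omega)$ itself you should define it on the locally closed locus $V=\{[T]\in Gr(e,\PP^{d})\ |\ \dim\partial^{-i}T=e+1-i\ \mbox{for}\ i=1,\ldots,e\}$, cut out by the usual open and closed rank conditions on the maps $f\mapsto(\partial_xf,\partial_yf)$ modulo the successive kernels; your pointwise computation via Proposition \ref{prop:d-1}(c) shows precisely that $\Phi_e(\Omega)\subseteq V$ and that the kernels form subbundles over $V$. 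With that adjustment the argument is complete and correct.
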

\begin{proof} First we show that $\Phi_e^{-1}(\Phi_e(f))=[f]$ for any $[f]\in\PP^{d+e}\setminus Sec^bC_{d+e}$. Indeed for $T=\partial^e(f)$ and by iterated applications of Proposition \ref{prop:d-1}, one has $\partial^{-e}T=(f)$. So $\Phi_e$ is injective on $\PP^{d+e}\setminus Sec^eC_{d+e}$. Indeed one can even show that the differential of $\Phi_e$ at a point $[g]\in \PP^{d+e}\setminus Sec^bC_{d+e}$ is injective, so that $\Phi_e$ is an embedding. A tangent vector at $[g]$ may be represented as  a class $\bar{h}\in S^{d+e}U/\langle g\rangle$ and the differential can be calculated as follows:
$$d\Phi_e(\bar{h})= \partial_x^e h\wedge\cdots\wedge \partial_y^eg+\cdots+\partial_x^eg\wedge\cdots\wedge\partial_y^e h\mbox{ mod }\Phi_e([g]).$$
Assume that $d\Phi_e(\bar{h})=0$. Hence up to changing the representative $h$ by adding a suitable multiple of $g$, we may assume 
$$\partial_x^e h\wedge\cdots\wedge \partial_y^eg+\cdots+\partial_x^eg\wedge\cdots\wedge\partial_y^e h=0.$$
Now we take the wedge product of the relation above with $\partial_x^eg,\ldots,\partial_y^eg$ respectively, and after easy manipulations we find $$\partial_x^{e-i}\partial_y^i h\wedge \partial_x^eg,\wedge\cdots\wedge\partial_y^eg=0$$ for any $i=0,\ldots,e$. This means $\partial^e(h)\subseteq \partial^e(g)$. Now we find $\langle h\rangle\subseteq\partial^{-e}\partial^e(h)\subseteq \partial^{-e}\partial^e(g)=\langle g\rangle$, the last equality holding by the assumption $[g]\in\PP^{d+e}\setminus Sec^eC_{d+e}$, as discussed above. Hence $\bar{h}=0$, as claimed.\end{proof}
\paragraph{\bf Remark} One can also prove that the birational map $\Phi_e$ contracts any $e$-dimensional linear space $\PP(S)\subset\PP(S^{d+e}U)$ that is a $e+1$ secant space, to the point $[\partial^eS]\in Gr(e,\PP^d)$. We omit the details.
\subsection{The variety of representations of a given space $T$}
Let $T\subseteq S^dU$ be a proper subspace with numerical type $(a,b_1,\ldots,b_r)$ and let $S$ the $C_d$-generated part of $T$, with $\dim S=a+1$, $a\geq -1$. We define the subvariety $V_T\subseteq \PP^{d+b_1}\times\cdots\times \PP^{d+b_r}$ whose points are the $r$-tuples $([f_1],\ldots,[f_r])$
representing $T$ as \begin{equation}\label{eq:Trepr}T=S\oplus\partial^{b_1}(f_1)\oplus\cdots\oplus\partial^{b_r}(f_r),\end{equation} in such a way that $\partial T=\partial S\oplus\partial^{b_1+1}(f_1)\oplus\cdots\oplus\partial^{b_r+1}(f_r)$, as in Definition \ref{def:type} and Theorem \ref{thm:main1}. The object of this subsection is to compute the dimension of $V_T$. Observe that if $([f_1],\ldots,[f_r])$ correspond to a fixed representation of $T$ and $([f_1'],\ldots,[f_r'])$ is a small deformation of $([f_1],\ldots,[f_r])$ subject only to the conditions $\partial^{b_i}(f_i')\subseteq T$ for any $i=1,\ldots r$, then $([f_1'],\ldots,[f_r'])$ will also provide a representation for $T$, since the requirements that  $S\oplus\partial^{b_1}(f_1')\oplus\cdots\oplus\partial^{b_r}(f_r')$ and $\partial S\oplus\partial^{b_1+1}(f_1')\oplus\cdots\oplus\partial^{b_r+1}(f_r')$ are direct sums are open conditions.
Let us define, for any integer $b\geq 0$, the linear space $$\PP(F_{T,b})=\{[f]\in \PP^{d+b}\ |\ \partial^{b}(f)\subseteq T\}.$$
So we see that $V_T$ contains a dense open set of $\PP(F_{T,b_1})\times \cdots\times \PP(F_{T,b_r})$.
Hence $$\dim V_T=\dim \PP(F_{T,b_1})+\ldots+\dim \PP(F_{T,b_r}).$$
\paragraph{\bf Computation of $\dim F_{T,b}$} We have $f\in F_{T,b}$ if and only if $f\in \partial^{-b}T$, so we have the identification
$$F_{T,b}=\partial^{-b}T=\partial^{-b}S\oplus\bigoplus_{b_i\geq b}\partial^{b_i-b}(f_i),$$
by iterated applications of Proposition \ref{prop:d-1}, (e). Hence, if $b\leq b_i$ for some $i$, we find 
$$\dim \PP (F_{T,b})=\dim S+\sum_{b_i\geq b}(b_i-b+1)-1=a+\sum_{b_i\geq b}(b_i-b+1).$$
\paragraph{\bf Computation of $\dim V_T$} We may assume $b_1\geq \cdots\geq b_r\geq 0$. Applying the formula obtained above to $b=b_1,\ldots,b_r$, we obtain \begin{equation}\label{eq:dimVT}\dim V_T
=ra+\sum_{i=1}^r\sum_{b_j\geq b_i}(b_j-b_i+1).\end{equation}

\subsection{Dimensions of the varieties parametrizing  spaces $T$ of a given type}
Finally, given a decomposition type $\tau=(a,b_1,\ldots,b_r)$ as in Theorem \ref{thm:main1} and Definition \ref{def:type}, setting $e+1=\dim T=a+1+\sum b_i+r$ we define the subvariety of the Grassmannian $Gr(e,\PP^d)$ of spaces $\PP(T)$ of type $\tau$:
$$G_\tau=\{[T]\in Gr(e,\PP^d)\ |\ T \mbox{ of type }\tau\}.$$
We know that $G_\tau$ is parameterized by the $(r+1)$-tuples $(S,f_1,\ldots,f_r)$ with $\PP(S)$ a $(a+1)$-secant $a$-plane to $C_d$ and $[f_i]\in \PP^{d+b_i}\setminus Sec^{b_i}C_{d+b_i}$ for $i=1,\ldots,r$. The fibers of this parametrization are the varieties $V_T$, which are open sets in products of projective spaces. Taking into account Proposition \ref{thm:injlocus}, we see that
$$\dim G_\tau=a+1+rd+\sum b_i-\dim V_T,$$ with $\dim V_T$ given by formula (\ref{eq:dimVT}).
Then we can state the following theorem.
\begin{thm}\label{thm:main2} The subvarieties $G_\tau\subset Gr(e, \PP^d)$ parametrizing spaces $T\subset S^{d}U$ with $\dim T=e+1$ and numerical type of the form $\tau=(a,b_1,\ldots,b_r)$ are irreducible rational quasi-projective varieties of dimension
\begin{equation}\label{eq:dimGtau}\dim G_\tau=e+1+r(d-a-1)-\sum_{i=1}^r\sum_{b_j\geq b_i}(b_j-b_i+1).\end{equation} \end{thm}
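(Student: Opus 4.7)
The plan is to realize $G_\tau$ as the image of a dominant rational map $\pi\colon P\dashrightarrow G_\tau$ from a transparently rational parameter space $P$, and to read off both the dimension formula and the rationality from a fiber analysis. Let $\Sigma_{a+1}$ denote the variety of $C_d$-generated $(a+1)$-dimensional subspaces $S\subseteq S^dU$; by Proposition \ref{prop:secant}, $S$ is exactly the annihilator of a non-zero $H\in S^{a+1}U^\ast$, so $\Sigma_{a+1}\cong\PP(S^{a+1}U^\ast)\cong\PP^{a+1}$. Take $P=\Sigma_{a+1}\times\PP^{d+b_1}\times\cdots\times\PP^{d+b_r}$, smooth, irreducible and rational of dimension $a+1+rd+\sum b_i$, and define $\pi(S,[f_1],\dots,[f_r])=[S+\sum_i\partial^{b_i}(f_i)]$. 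This is well-defined and regular on the open locus $P^\circ\subset P$ where $[f_i]\notin Sec^{b_i}C_{d+b_i}$ and the two direct-sum requirements appearing in Theorem \ref{thm:main1} both hold; the existence portion of that theorem guarantees $P^\circ\neq\emptyset$ and that $\pi|_{P^\circ}$ surjects onto $G_\tau$.

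Next I would identify the fibers of $\pi$. Over $[T]\in G_\tau$ the component $S$ is forced to equal $S_T$ by the uniqueness of the $C_d$-generated summand (Step 5 of Theorem \ref{thm:main1}), and the injectivity of $\Phi_{b_i}$ off its base locus (Proposition \ref{thm:injlocus}) ensures that distinct admissible $[f_i]$ give distinct summands $\partial^{b_i}(f_i)$, so no hidden ambiguity creeps in. The set of valid $[f_i]$'s is then the open subset of $\prod_i\PP(F_{T,b_i})$ (with $F_{T,b_i}=\partial^{-b_i}T$) cut out by the directness conditions, and iterated application of Proposition \ref{prop:d-1}\,(e) gives $\dim F_{T,b_i}=a+1+\sum_{b_j\geq b_i}(b_j-b_i+1)$; hence the fiber $V_T=\pi^{-1}([T])$ is irreducible of the dimension recorded in (\ref{eq:dimVT}). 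The standard dimension formula for a dominant morphism with equidimensional irreducible fibers then yields irreducibility of $G_\tau$ and $\dim G_\tau=\dim P-\dim V_T$; a routine arithmetic simplification using $e+1=a+1+\sum b_i+r$ produces (\ref{eq:dimGtau}).

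The remaining, and in my view the main, obstacle is rationality: $\pi$ is not globally trivial and Lüroth-type cancellation is not available in this range of dimensions. My strategy is to produce an explicit birational inverse by restricting $\pi$ to a rational subvariety of the expected dimension. Fixing once and for all general linear subvarieties $L_i\subset\PP^{d+b_i}$ of codimension $\dim\PP(F_{T,b_i})$ (a quantity depending only on $\tau$), the product $L=\Sigma_{a+1}\times L_1\times\cdots\times L_r$ is a product of projective spaces, hence rational, and has dimension exactly $\dim G_\tau$. For generic $[T]\in G_\tau$ the fiber of $\pi|_L$ at $[T]$ is $\{S_T\}\times\prod_i(L_i\cap\PP(F_{T,b_i}))$, and a transversality count inside each factor $\PP^{d+b_i}$ shows this reduces to a single reduced point for generic choice of the $L_i$. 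This makes $\pi|_L$ birational and transports rationality from $L$ to $G_\tau$. The genuinely geometric point to verify is precisely this transversality, together with the single intersection point landing in the open locus defining $V_T$; both reduce to dimension counts in products of projective spaces, but the former is the only non-numerical step in the argument.
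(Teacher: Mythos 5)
Your proposal is correct, and the irreducibility and dimension parts coincide with the paper's argument: the paper also parametrizes $G_\tau$ by tuples $(S,[f_1],\dots,[f_r])$ (using the symmetric product $C_d^{(a+1)}\cong\PP^{a+1}$ where you use $\PP(S^{a+1}U^\ast)$ via Proposition \ref{prop:secant} --- the same space), identifies the fiber over $[T]$ with a dense open subset of $\prod_i\PP(F_{T,b_i})$ with $F_{T,b_i}=\partial^{-b_i}T$ computed through Proposition \ref{prop:d-1}(e), and subtracts. Where you genuinely diverge is the rationality. The paper proceeds by induction on $B_\tau=\sum b_i$: it observes that the summand $\partial(\partial^{-1}T)=S_T\oplus\bigoplus_{b_i\geq 1}\partial^{b_i}(f_i)$ is canonically attached to $T$, that $\partial^{-1}T$ has the smaller type $\tau'=(a,b_1-1,\dots,b_s-1)$, and hence that an open set of $G_\tau$ is parametrized by $G_{\tau'}\times Gr(r-s,W)$ for a complement $W$; the base case $\tau=(a,0,\dots,0)$ is handled by $C_d^{(a+1)}\times Gr(r,W)$. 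You instead cut the rational parameter space $P$ by fixed general linear subspaces $L_i\subset\PP^{d+b_i}$ of codimension $\dim\PP(F_{T,b_i})$ and argue that $\pi|_L$ is generically injective and dominant, hence birational. This works: since $[T]\mapsto[\partial^{-b_i}T]$ is a morphism to a Grassmannian on a dense open subset of $G_\tau$, the usual incidence-variety count shows that for general $(L_i)$ and general $[T]$ each $L_i$ meets the linear space $\PP(F_{T,b_i})$ transversally in one reduced point lying in the dense open locus $V_T$, and generic injectivity plus dominance in characteristic zero gives birationality. Your route is more direct and avoids the paper's induction (and its mildly delicate uniform choice of complements $W$), at the price of the transversality verification you flag --- which is indeed routine; the paper's induction, in exchange, exhibits the extra structural fact that $G_\tau$ is birational to $G_{\tau'}\times Gr(r-s,W)$, relating the strata to one another via the operator $\partial^{-1}$.
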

\begin{proof} Let us denote $C^{(a)}$ the $a$-fold symmetric product of $C$, whose points are identified with degree $d$ effective divisors $p_1+\cdots+p_a$ of $C$. The irreducibility of $G_\tau$ is a consequence of the existence of a dominant rational map $\Psi:C_d^{(a)}\times\PP^{b_1}\times\cdots\times\PP^{b_r}\dashrightarrow G_\tau$ defined by $$(p_0+\cdots+p_a, [f_1],\ldots,[f_r])\mapsto [\langle p_0^d,\ldots,p_a^d\rangle\oplus\partial^{b_1}(f_1)\oplus\cdots\oplus\partial^{b_r}(f_r)].$$ The formula for the dimension of $G_\tau$ is a consequence of the calculations made above. Hence it remains to prove only the rationality of $G_\tau$. Note that the existence of the map above immediately implies the {\em unirationality} of $G_\tau$. Recall that the fibers of the map above have the form $\{D\}\times V_T$ with $D\in C_d^{(a)}$ and $V_T$ as denoted in the discussion above.
Assuming that $r=0$ the map $\Psi$ reduces to a map $\Psi:C_d^{(a)}\to Gr(e+1,S^dU)=Gr(e,\PP^d)$ defined by $\Psi(D)=[\langle D\rangle]$, with $\langle D\rangle$ the linear span of $D$ considered as a subscheme of $C_d$ and hence of $\PP^d$. We already know this map is injective and hence birational to the image. Assume now that $r\geq 1$. 
We will prove the rationality of $G_\tau$ by induction on $B_\tau=\sum b_i$. If $B_\tau=0$ then $b_i=0$ for all $i=1,\ldots,r$, hence the map $\Psi$ becomes $\Psi(D,[f_1],\ldots,[f_r])=[\langle D\rangle\oplus \langle [f_1],\ldots [f_r]\rangle]$. The $\langle D\rangle$ summand is uniquely determined by $[T]= \Psi(D,f_1,\ldots,f_r)$, since $\langle D\rangle=S_T$ in the notations of Theorem \ref{thm:main1}.  Let us denote $G_a\subset Gr(a+1,S^dU)$ the variety of spaces $[S]=[\langle D\rangle]$ with $D\in C_d^a$ (this  part does not exist if $a=-1$). We have already proved that $G_a$ is rational.  Given a fixed $[T]=\Psi(D,[f_1],\ldots,[f_r])$. Let us take a complement $W$ of $\langle D\rangle$ in $S^dU$. Then we can parametrize an open set of $G_\tau$ by an open set in $C_d^{(a)}\times Gr(r,W)$. This shows that $G_\tau$ is rational for $\tau=(a,0,\ldots,0)$. Now assume the rationality of $G_{\tau'}$ has been proved for any $\tau$ such that $B_\tau'< B_\tau$ and let us examine $G_\tau$. We write $\tau=(a,b_1,\ldots,b_s,0,\ldots0)$ with $b_i>0$ for all $i=1,\ldots s$. Let us fix a space $[T]\in G_\tau$, so that $T=S_T\oplus \bigoplus_{i=1}^s\partial^{b_i}(f_i)\oplus T'$ with $T'$ a $(r-s)$ dimensional subspace of $S^dU$.
We have $\partial^{-1}T=S_T\oplus \bigoplus_{i=1}^s\partial^{b_i-1}(f_i)$ and $\partial(\partial^{-1}T)=S_T\oplus \bigoplus_{i=1}^s\partial^{b_i}(f_i)$. Note that the numerical type of $\partial^{-1}(T)$ is $\tau'=(a,b_1-1,\ldots,b_s-1)$ and that the summand $\partial(\partial^{-1}(T))$ of $T$ is uniquely determined by $T$. 
Taking a complement $W$ of $S_T\oplus \bigoplus_{i=1}^s\partial^{b_i}(f_i)$ in $S^dU$, we see that an open set in $G_\tau$ is parameterized by $G_{\tau'}\times Gr(r-s, W)$. Hence $G_\tau$ is rational.
\end{proof}
To illustrate the formula obtained for $\dim G_\tau$ in a relevant case, we apply it to the following proposition.
\begin{prop}\label{prop:generic1} Assume that $T$ is of type $\tau=(a,b_1,\ldots,b_r)=(-1,0,\ldots,0)$. In particular $\dim T=r$. Such a space $T$ exists only if $2r\leq d$. Then $G_\tau$ is an open set of the Grassmannian $Gr(r,S^dU)$, indeed one has $\dim G_\tau=r(d+1-r)$. \end{prop}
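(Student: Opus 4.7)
The plan is to derive this proposition directly from Theorems \ref{thm:main1} and \ref{thm:main2} applied to the numerical type $\tau=(-1,0,\ldots,0)$, together with a standard dimension-count on the ambient Grassmannian. The first step is to verify the existence range: the general criterion at the end of Theorem \ref{thm:main1}, namely $a+1+\sum(b_i+2)\leq d$, specializes in this case to $0+2r\leq d$, which gives precisely the claimed condition $2r\leq d$.

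The second step is to compute $\dim G_\tau$ by plugging $a=-1$, $b_1=\cdots=b_r=0$, and $e+1=\dim T=a+1+r+\sum b_i=r$ into the dimension formula (\ref{eq:dimGtau}). A short calculation carries this out: the middle term $r(d-a-1)$ collapses to $rd$, and the double sum $\sum_{i=1}^r\sum_{b_j\geq b_i}(b_j-b_i+1)$ collapses to $r^2$ since every inequality $b_j\geq b_i$ is trivially satisfied and each summand equals $1$. This yields $\dim G_\tau=r+rd-r^2=r(d+1-r)$.

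The final step is the dimension comparison. Since $\dim Gr(r,S^dU)=r((d+1)-r)=r(d+1-r)$ matches $\dim G_\tau$, and since Theorem \ref{thm:main2} already guarantees that $G_\tau$ is an irreducible quasi-projective subvariety of $Gr(r,S^dU)$, the only possibility is that $G_\tau$ is a dense open subset of the Grassmannian. I do not expect any real obstacle here: the substantive content sits inside Theorems \ref{thm:main1} and \ref{thm:main2}, and the present proposition is essentially a bookkeeping consequence of the parameter count combined with the standard fact that an irreducible locally closed subvariety of full dimension in an irreducible ambient variety must be open.
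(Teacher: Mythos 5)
Your proposal is correct and follows essentially the same route as the paper: both verify $2r\leq d$ (you via the existence criterion of Theorem \ref{thm:main1}, the paper via $\dim\partial T=2r\leq\dim S^{d-1}U=d$, which amounts to the same thing) and both obtain $\dim G_\tau=r+rd-r^2=r(d+1-r)$ by substituting into formula (\ref{eq:dimGtau}). Your added remark that equality of dimensions forces the irreducible locally closed $G_\tau$ to be open is a correct justification of a point the paper leaves implicit.
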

\begin{proof} Assume that $T$ has type as above. Then one has $\dim \partial T=2\dim T=2r\leq \dim S^{d-1}U=d$. Using formula (\ref{eq:dimGtau}) we find $\dim G_\tau=r+rd-r^2=r(d+1-r).$ \end{proof}
\section{Digression: computation of the generic $\partial$-type.} By the {\em generic $\partial$-type} we mean the numerical type of $T$ realized for $[T]$ varying in a dense open subset of $Gr(e+1,S^dU)$. Of course there is only one such type and we are going to determine it as a function of $d$ and $\dim T$. The case of $\dim T=d+1$ is trivial, the Grassmannian reduces to one point, and we can conventionally associate to it the type $(a)=(d)$, since in this case $T=\langle C_d\rangle$ and it is generated by $d+1$ points in the rational normal curve. Similarly, in the case $\dim T=d$ we have only  type $(a)=(d-1)$. Next we will assume $\dim T=e+1\leq d-1$. Then a general $T$ of such dimension does not intersect $C_d$ and hence it will be of type $(-1,b_1,\ldots,b_r)$, i.e. with $a=-1$. We distinguish two cases.
\vskip2mm
\paragraph{\bf Case 1.} We assume $2(e+1)=2\dim T\leq d$. Then the general $T=\langle f_1,\ldots,f_{e+1}\rangle$ has the property that $\partial(T)=\partial(f_1)\oplus\cdots\partial(f_{e+1})$ and $\dim\partial(T)=2(e+1)$. This can be easily seen finding a special $T$ with the property above, for example $T=\langle x^{d-1}y,x^{d-3}y^3,\ldots,x^{d-1-2e}y^{2e+1}\rangle$, so $\dim \partial(T)=2(e+1)$ also for the general $T$ by semicontinuity. This implies that the generic type for $2\dim T\leq d$ is $$(a,b_1,\ldots,b_{e+1})=(-1,0,\ldots,0).$$ The same result follows applying Proposition \ref{prop:generic1}.
\vskip2mm
\paragraph{\bf Case 2.} We now assume $2(e+1)>d>e+1$. We denote $r=d-(e+1)$ and $s=2(e+1)-d$. We claim that for $T$ general we have $\partial(T)=S^{d-1}U$ and therefore $\dim\partial(T)=d=e+1+r=\dim T+r$. Any $T$ of type $(-1,b_1,\ldots,b_r)$ with $\sum b_i=e+1-r=2(e+1)-d=s$ will have $\dim \partial(T)=d$ and hence $\partial(T)=S^{d-1}U$. By semicontinuity this holds also for the general $[T]\in Gr(e+1, S^dU)$, as we claimed. To find the generic type we impose the condition $\dim G_\tau=\dim Gr(e+1,S^dU)=(e+1)(d-e)$ and use formula (\ref{eq:dimGtau}) for $\dim G_\tau$, namely we impose
\begin{eqnarray*}(e+1)(d-e)&=&\dim G_\tau\\ &=&e+1+r(d-a-1)-\sum_{i=1}^r\sum_{b_j\geq b_i}(b_j-b_i+1))\\
&=&rd+e+1-\sum_{i=1}^r\sum_{b_j\geq b_i}(b_j-b_i+1)\\
&=&(d-e-1)d+e+1-\sum_{i=1}^r\sum_{b_j\geq b_i}(b_j-b_i+1).\end{eqnarray*} 
So we want \begin{eqnarray*}\sum_{i=1}^r\sum_{b_j\geq b_i}(b_j-b_i+1)&=&(d-e-1)d+e+1-(e+1)(d-e)\\
&=&(d-e-1)^2=r^2. \end{eqnarray*}
Note that from the topological considerations at the beginning of this subsection, we already know that there must exist only one $(b_1,\ldots,b_r)$ satisfying the equalities above. Write $r=h+k$ and $s=h(a+1)+ka=ra+k$, with $a,h,k\geq 0$. Then we set $(b_1,\ldots,b_h,b_{h+1},\ldots,b_{h+k})=(a+1,\ldots,a+1,a,\ldots,a)$, i.e. with $h$ entries equal to $a+1$ and $k$ entries equal to $a$. We have of course $\sum b_i=s$ and moreover $\sum_{i=1}^r\sum_{b_j\geq b_i}(b_j-b_i+1)=h^2+2hk+k^2=r^2$, so the generic type when $2(e+1)>d>e+1$ is the following:
$$(-1,b_1,\ldots,b_h,b_{h+1},\ldots,b_{h+k})=(-1,a+1,\ldots,a+1,a,\ldots,a),$$
with $h+k=d-(e+1)$ and $h(a+1)+ka=2(e+1)-d$.  
\section{Restricted tangent bundles to projective rational curves}
Any non degenerate rational curve $C\subset\PP^{d-e-1}$ of degree $d$ and with $e\geq 0$ can be identified, up a projectivity of $\PP^{d-e-1}$, with the image of the rational normal curve $C_d\subset\PP(S^dU)=\PP^d$ by means of a projection $\pi:\PP^d\dashrightarrow\PP^{d-e-1}$,  with vertex a space $\PP(T)\subset \PP^d$ with $\dim T=e+1$. This point of view was already adopted in \cite{Bernardi} to study the Hilbert schemes of rational curves with a given normal bundle or restricted tangent bundle. We can assume that the vertex $\PP(T)$ does not intersect $C_d$ and we get a parametrization $f:\PP^1\to C$ given as the composition $\PP^1\stackrel{\nu_d}\to C_d\stackrel{\pi}\to C$. The {\em restricted tangent bundle} is by definition $f^\ast T_{\PP^{d-e-1}}$ and, as a locally free sheaf on $\PP^1$, it is isomorphic to $\OO_{\PP^1}(a_1)\oplus\cdots\oplus \OO_{\PP^1}(a_{d-e-1})$, for suitable numbers $a_1\geq a_2\geq\cdots\geq a_{d-e-1}$.
The restricted Euler exact sequence \begin{equation}\label{eq:EulerC} 0\to \OO_{\PP^1}\to (S^dU/T)\otimes\OO_{\PP^1}(d)\to f^\ast(T_{\PP^{d-e-1}})\to 0,\end{equation}  and the fact that $C\subset\PP^{d-e-1}$ is non-degenerate show that \begin{equation}\label{eq:splitconstraints}a_1\geq\cdots\geq a_{d-e-1}\geq d+1,\quad\sum a_i=(d-e)d.\end{equation} 
In the remainder of this section we will name $s=d-e-1$.
\vskip2mm
\paragraph{\bf Remark} As it was already observed in \cite{Ran}, the exact sequence (\ref{eq:EulerC}) is determined by an extension class $\xi\in Ext^1(\bigoplus_{i=1}^s\OO_{\PP^1}(a_i),\OO_{\PP^1})$, and one knows that a general such extension has the form $0\to\OO_{\PP^1}\to\mathcal{E}\to \bigoplus_{i=1}^s\OO_{\PP^1}(a_i)\to 0$, with $\mathcal{E}\cong \OO_{\PP^1}^{s+1}(d)$. Note that we may recover the map $f$ from the sequence (\ref{eq:EulerC}). If $f:\PP^1\to\PP^s$ is given by $f=(f_0,\ldots,f_s)$ with each $f_i\in S^dU^\ast$, then the map $\OO_{\PP^1}\to\mathcal{E}$ in (\ref{eq:EulerC}) is given by $(f_0,\ldots,f_s)^t$.  This may be used to show the irreducibility and the rationality of the spaces of maps $f:\PP^1\to\PP^s$ with $\T_f=\bigoplus_{i=1}^s\OO_{\PP^1}(a_i)$. 

Our next results will show how to explicitly construct such maps and such rational curves as projections of the degree $d$ rational normal curve, explaining how to choose the vertices $\PP(T)$ of the needed projections.
\vskip2mm
\paragraph{}
For any $d\geq 0$ and $T\subset S^dU$ we denote $\langle\cdot,\cdot\rangle: S^dU^\ast\times S^dU\to\C$ the natural pairing and
 $$T^\perp=\{f\in S^dU^\ast\ |\ \langle f,\tau\rangle=0,\ \forall \tau\in T\}.$$
 Let us denote $T^\perp S^kU^\ast$ the image of the multiplication map $T\otimes S^kU^\ast\to S^{k+d}U^\ast$.
 
 Note that $S^dU/T\cong (T^\perp)^\ast$. Writing  $f^\ast(T_{\PP^{d-e-1}})=\T$, the restricted Euler sequence may also be written
 \begin{equation}\label{eq:EulerC1} 0\to \OO_{\PP^1}\to (T^\perp)^\ast\otimes\OO_{\PP^1}(d)\to \T\to 0.\end{equation}

The object of this section is to relate the {\em splitting type}  of $(a_1,\ldots,a_{d-e-1})$ of $\T$ and the {\em numerical type} $(-1,b_1,\ldots,b_r)$ of the vertex $T\subseteq S^dU$ introduced and studied in the preceding sections.  The following result will be useful for our purposes. 
\begin{lm}\label{lm:relTTast} For any $k\geq 0$ one has $(\partial^{-k}T)^\perp= T^\perp S^kU^\ast$. \end{lm}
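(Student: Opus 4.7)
The plan is to reduce the whole identity to the adjoint property of the apolarity pairing, which turns multiplication on one side into the derivation action on the other. Concretely, for $h\in S^dU^\ast$, $D\in S^kU^\ast$ and $f\in S^{d+k}U$, one has the identity
$$\langle hD,f\rangle=\langle h,Df\rangle,$$
where on the left $hD\in S^{d+k}U^\ast$ is the product of operators and on the right $Df\in S^dU$ is the apolar action; both sides equal the scalar obtained by letting the order-$(d+k)$ operator $hD$ act on $f$.

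First I would verify the natural recursive characterization of $\partial^{-k}T$: namely that $f\in\partial^{-k}T$ if and only if $Df\in T$ for every $D\in S^kU^\ast$. The case $k=1$ is just the definition (\ref{eq:delta-1T}), and for the induction one uses $\partial^{-k}T=\partial^{-1}(\partial^{-(k-1)}T)$ together with the fact that products $D'\cdot D''$ with $D'\in U^\ast$ and $D''\in S^{k-1}U^\ast$ span $S^kU^\ast$ in characteristic zero, so that requiring $D'D''f\in T$ for all such pairs amounts to requiring $Df\in T$ for every $D\in S^kU^\ast$.

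Next, by non-degeneracy of the pairing $S^dU^\ast\times S^dU\to\C$, the condition $Df\in T$ is equivalent to $\langle h,Df\rangle=0$ for every $h\in T^\perp$. Applying the adjoint identity rewrites this as $\langle hD,f\rangle=0$, and letting $h$ and $D$ vary gives precisely the condition $f\in(T^\perp S^kU^\ast)^\perp$. Chaining these equivalences yields
$$\partial^{-k}T=(T^\perp S^kU^\ast)^\perp,$$
and taking $\perp$ of both sides inside $S^{d+k}U^\ast$ produces the desired equality $(\partial^{-k}T)^\perp=T^\perp S^kU^\ast$.

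I do not expect any serious obstacle: the argument is a purely formal manipulation resting on the apolarity adjoint, and both inclusions come out simultaneously from the chain of equivalences. The only point requiring minor care is the recursive characterization of $\partial^{-k}T$ alluded to above, where one must check that iterating the operator $\partial^{-1}$ actually captures the action of \emph{all} of $S^kU^\ast$ and not merely split products of linear forms; this is precisely the surjectivity of the multiplication map $U^\ast\otimes S^{k-1}U^\ast\to S^kU^\ast$, which is standard.
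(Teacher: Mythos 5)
Your proof is correct and follows essentially the same route as the paper's: both rest on the apolarity adjoint identity $\langle hD,f\rangle=\langle h,Df\rangle$ together with non-degeneracy of the pairing, and both reduce general $k$ to the case $k=1$ via the surjectivity of the multiplication $U^\ast\otimes S^{k-1}U^\ast\to S^kU^\ast$. The only difference is packaging: you run the induction on the characterization of $\partial^{-k}T$ and obtain both inclusions at once from a single chain of equivalences ending with a double-perp, whereas the paper proves the two inclusions of the $k=1$ case separately and then iterates the perp identity itself.
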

\begin{proof} It is sufficient to show the statement in the case $k=1$, that is the equality $(\partial^{-1}T)^\perp=T^\perp U^\ast$. Indeed the general case will follow by recursion from the $k=1$ case, since one has $T^\perp S^{k+1}U^\ast=(T^\perp S^k U^\ast)U^\ast$ and $\partial^{-k-1}T=\partial^{-1}(\partial^{-k}T)$.

If $f\in T^\perp$, $l\in U^\ast$ and $g\in \partial^{-1}T$, one has $$\langle fl,g\rangle=\langle f,D_l(g)\rangle =0,$$
where we have denoted $D_l=\alpha\partial_x+\beta\partial_y$ the derivation associated to $l\in U^\ast$. This calculation shows that $T^\perp U^\ast\subseteq (\partial^{-1}T)^\perp$.

Now we have to show that $(\partial^{-1}T)^\perp\subseteq T^\perp U^\ast$. This is equivalent to show that $(T^\perp U^\ast)^\perp\subseteq \partial^{-1}T$. Let $\tau\in (T^\perp U^\ast)^\perp$, so that for any $f\in T^\perp$ one has $\langle f\partial_x, \tau\rangle=\langle f\partial_y, \tau\rangle=0$. Then we have $$\forall\ f\in T^\perp\ \langle f,\partial_x(\tau)\rangle=\langle f,\partial_y(\tau)\rangle=0,$$ which implies $\partial_x(\tau)\in T$ and $\partial_y(\tau)\in T$ and therefore $\tau\in\partial^{-1}T$. \end{proof}
\subsection{Cohomology computations on the restricted tangent sheaf}
Tensoring (\ref{eq:EulerC1}) by $\OO(-k)$ we obtain the following exact sequences of vector spaces:
\begin{equation}\label{eq:cohomRT0} 0\to (T^\perp)^\ast\otimes H^0\OO(d-k)\to H^0\T(-k)\to H^1\OO(-k)\to 0 ,\end{equation} 
for any $1\leq k\leq d+1$ and
\begin{equation}\label{eq:cohomRT1} 0\to H^0\T(-k)\to H^1\OO_{\PP^1}(-k)\stackrel{\rho}\to 
(T^\perp)^\ast\otimes H^1\OO_{\PP^1}(d-k)\to H^1\T(-k)\to 0,\end{equation}
for any $k\geq d+2$.
Note that for $k\geq d+2$ the map $\rho$ is the dual of the multiplication map 
\begin{equation*}\label{eq:Serredualrho} T^\perp\otimes S^{k-d-2}U^\ast\to S^{k-2}U^\ast.\end{equation*}
Combining this observation with the result of Lemma \ref{lm:relTTast}, we get the following formulas
\begin{eqnarray*} h^0 \T(-k)&=&k-1-\dim (T^\perp S^{k-d-2}U^\ast)\\
&=&k-1-\dim (\partial^{-k+d+2}T)^\perp\\
&=&\dim (\partial^{-k+d+2}T).\end{eqnarray*}
The formula obtained above holds for any $k\geq d+2$. Note that for $k=d+2$ one always has $h^0 \T(-d-2)=\dim T$. On the other hand we have 
$H^0\T(-d-1)\cong H^1\OO_{\PP^1}(-d-1)=S^{d-1}U^\ast$ and from (\ref{eq:cohomRT0}) and  Serre duality we get $H^0\T(-k)\cong ((T^\perp)^\ast\otimes S^{d-k}U^\ast)\oplus (S^{k-2}U^\ast)^\ast$ for $1\leq k\leq d$.
\subsection{Splitting type of $\T$}
Once the dimensions of the spaces of global sections $H^0\T(-k)$ are known for any $k\in \Z$, some standard calculations allow to compute the splitting type $\T=\OO_{\PP^1}(a_1)\oplus\cdots\oplus \OO_{\PP^1}(a_{s})$. The object of this section is to relate it to the numerical type of $T$.
\vskip2mm
\paragraph{\bf Example.} Let us set $d=5$ and $e=1$, corresponding to considering rational quintic curves $C\subset\PP^3$. The vertex $\PP(T)\cong\PP^1$ of the projection $\PP^5\dashrightarrow\PP^3$ has numerical type either $(-1,0,0)$ or $(-1,1)$. On the other hand, the two possible splitting types for $\T$ are $(8,6,6)$ or $(7,7,6)$, obtained from the conditions (\ref{eq:splitconstraints}). 

If $\T=\OO(8)\oplus\OO(6)^2$ then we have
$h^0\T(-7)=2=\dim T$ and $h^0\T(-8)=1=\dim \partial^{-1}T$. This implies that the numerical type of $T$ corresponding to $\T=\OO(8)\oplus\OO(6)^2$ is $(-1,1)$. Then the numerical type corresponding to $\T=\OO(7)^2\oplus\OO(6)$ must be $(-1,0,0)$ and indeed one verifies that in this case $h^0\T(-8)=0=\dim\partial^{-1}T$.
\vskip2mm
\paragraph{\bf Calculations in the general case}
As above we write $$\T\cong \OO(a_1)\oplus\cdots\oplus\OO(a_{s})$$ with $a_1\geq\cdots\geq a_{s}\geq d+1$ and $\sum a_i=(d-e)d$.  We have seen that $h^0\T(-k)=\dim\partial^{-k+d+2}T$ for any $k\geq d+2$ and $h^0\T(-d-1)=d$. Let us set $$h=k-d-2,\quad c_i=a_i-d-2\ \mbox{ for }\ i=1,\ldots,d-e-1.$$ Then from (\ref{eq:splitconstraints}) we get $c_1\geq\cdots\geq c_{d-e-1}\geq -1$ and  $\sum (c_i+1)=e+1$.

This is compatible with $\T(-d-1)=\OO(c_1+1)\oplus\cdots\oplus\OO(c_{d-e-1}+1)$ and $\sum_{i=1}^{d-e-1} (c_i+2)=h^0\T(-d-1)=d$. We find also
$$\dim \partial^{-h}T=h^0\T(-d-2-h)=\sum_{c_i\geq h}(c_i-h+1)$$ for all $h\geq 0$. Taking the numerical type $(-1,b_1,\ldots,b_r)$ of $T$ into account, we have 
$$\dim \partial^{-h}T=\sum_{b_j\geq h}(b_j-h+1).$$
We have $d-e-1=d-\sum (b_j+1)=s$. Recall that the inclusion $\partial T\subseteq S^{d-1}U$ implies $\dim \partial T=\dim T+r\leq d-1$, so that $r\leq d-e-2<s$. 

We see that $b_1\geq\cdots \geq b_r\geq 0$ and $c_1\geq \cdots\geq c_s\geq -1$ are two sequences with $r<s$ and we denote
\begin{equation}\label{eq:relbc}
S(h)=\sum_{b_j\geq h}(b_j+1-h)=\sum_{c_i\geq h}(c_i+1-h),\ \forall\ h\geq 0.\end{equation}
Let us assume that $c_1\geq\cdots\geq c_{r'}\geq 0$ and $c_{r'+1}=\cdots=c_s=-1$. Then the conditions (\ref{eq:relbc}) involve only the $c_i$'s with $1\leq i\leq r'$. Taking the differences $\Delta S(h)=S(h)-S(h+1)$ for any $h\geq 0$ one finds $\Delta S(h)=|\{j\ |\ b_j\geq h\}|=|\{i\ |\ c_i\geq h\}|$ for any $h$, and taking once more the differences $\Delta^2 S(h)=\Delta S(h)-\Delta S(h+1)$ one finds $$|\{j\ |\ b_j= h\}|=|\{i\ |\ c_i= h\}|\quad \forall\ h\geq 0.$$ This implies 
$r'=r$ and $(b_1,\ldots,b_r)=(c_1,\ldots,c_r).$ Then we get $$(c_1,\ldots,c_r,c_{r+1},\ldots,c_{s})=(b_1,\ldots,b_r,-1,\ldots,-1).$$
We have finally the formula for the splitting type of $\T$
\begin{equation}\label{eq:tangent} 
\T=\OO(b_1+d+2)\oplus \cdots\OO(b_r+d+2)\oplus\OO(d+1)^{s-r}.
\end{equation}
We summarize the results of this section by stating the last theorem of this article. As in the beginning of this section, let $C$ be non degenerate degree $d$ rational curve in $\PP^{s}$ and let us recall that $C$ is parametrized by $f:\PP^1\to \PP^{s}$ with $f=\pi\circ \nu_d$ where $\pi:\PP(S^dU)\dashrightarrow \PP^{s} $ is a projection with vertex $\PP(T)$ of dimension $e$. Then one has the following result. 
\begin{thm}\label{thm:main3} The following facts are equivalent.
\begin{enumerate}
\item The numerical type of $T$ is  $(-1,b_1,\ldots,b_r)$.
\item The restricted tangent bundle $\T=f^\ast T_{\PP^{s}}$ has splitting type (\ref{eq:tangent}).
\end{enumerate}
Moreover the variety of maps $f:\PP^1\to \PP^{s}$ parametrizing rational curves as above and with $\T$ of the form (\ref{eq:tangent}) is irreducible, smooth and rational.
\end{thm}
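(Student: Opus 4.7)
The biconditional (1)$\Leftrightarrow$(2) is essentially contained in the computations of Subsection 6.2. Implication (1)$\Rightarrow$(2) is formula (\ref{eq:tangent}). For the converse, by Lemma \ref{lm:relTTast} the cohomology of $\T$ determines $\dim \partial^{-h}T$ for every $h \geq 0$; setting $c_i = a_i - d - 2$ as in Subsection 6.2, these dimensions are $S(h) = \sum_{c_i \geq h}(c_i - h + 1)$. Taking second differences $\Delta^2 S(h)$ recovers the multiplicity of each value $h$ in the sequence $(c_1, \ldots, c_s)$, and then the identifications $(c_1, \ldots, c_r) = (b_1, \ldots, b_r)$ and $c_{r+1} = \cdots = c_s = -1$ established in Subsection 6.2 determine the numerical type $(-1, b_1, \ldots, b_r)$ of $T$ from the splitting type of $\T$.

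Let $\mathcal{M}_\tau$ denote the variety of maps $f = (f_0, \ldots, f_s) \in (S^d U^\ast)^{s+1}/\C^\ast$ whose vertex $T = \langle f_0, \ldots, f_s\rangle^\perp$ has numerical type $(-1, b_1, \ldots, b_r)$. Sending $f \mapsto [T]$ defines a surjective morphism $\phi: \mathcal{M}_\tau \to G_\tau$ whose fiber over $[T]$ is the open subset of $\PP((T^\perp)^{\oplus(s+1)}) \cong \PP^{(s+1)^2 - 1}$ of ordered bases of $T^\perp$ having no common zero on $\PP^1$; surjectivity uses that $T$ contains no pure tensor $p^d$ (since $a = -1$), so such bases exist. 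Globally $\mathcal{M}_\tau$ is an open subset of the projective bundle $\PP(\SS^{\oplus(s+1)}|_{G_\tau}) \to G_\tau$, where $\SS$ is the tautological rank $s+1$ subbundle on $Gr(s+1, S^dU^\ast)$. Since a projective bundle over an irreducible (respectively rational) variety is irreducible (respectively rational), Theorem \ref{thm:main2} yields these properties for $\mathcal{M}_\tau$.

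Smoothness of $\mathcal{M}_\tau$ reduces, via the smooth surjective morphism $\phi$, to smoothness of $G_\tau$. To establish the latter, I would introduce the incidence variety $\tilde G_\tau$ of tuples $([T], [f_1], \ldots, [f_r])$ with $T = \partial^{b_1}(f_1) \oplus \cdots \oplus \partial^{b_r}(f_r)$ providing a valid decomposition in the sense of Theorem \ref{thm:main1}. The projection to the factors $\PP^{d+b_i}$ identifies $\tilde G_\tau$ with the open subset defined by the conditions of Theorem \ref{thm:main1} (each $[f_i] \notin Sec^{b_i}C_{d+b_i}$ and directness of the sum), so $\tilde G_\tau$ is smooth and rational. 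The other projection $\pi: \tilde G_\tau \to G_\tau$ is surjective with fibers $V_T$, which by Section 4.2 are open subsets of the fixed product $\PP^{b_1} \times \cdots \times \PP^{b_r}$, smooth of constant dimension.

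The main obstacle is to show that $\pi$ is a smooth morphism; by flat descent of smoothness from $\tilde G_\tau$ along $\pi$, this would yield smoothness of $G_\tau$. My approach is to exhibit Zariski-local sections of $\pi$ around any $[T_0] \in G_\tau$, by making the Kronecker normal form from Step 1 of the proof of Theorem \ref{thm:main1} algebraic in families: applied to the pencil $\delta: T \otimes U^\ast \to \partial T$, it provides a canonical decomposition depending algebraically on $[T]$ within the stratum of fixed numerical type. Producing and verifying such sections rigorously—for instance via an explicit elimination-theoretic construction inside $\tilde G_\tau$, or by exploiting the uniqueness of the Kronecker blocks within a stratum—would complete the smoothness argument and hence the proof.
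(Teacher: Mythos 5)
Your treatment of the equivalence $(1)\Leftrightarrow(2)$ and of irreducibility and rationality is correct and essentially the paper's own: the equivalence is exactly the difference-operator argument $\Delta^2S(h)$ of the preceding subsection run in both directions, and the paper likewise fibres the space of maps over $G_\tau$ (it phrases the fibre as a principal $PGL(d-e)$-bundle, you as an open subset of $\PP\bigl((T^\perp)^{\oplus(s+1)}\bigr)$; since $(s+1)^2-1=\dim PGL(d-e)$ these are the same thing, and either description gives irreducibility and rationality from Theorem \ref{thm:main2}).

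The genuine gap is in the smoothness claim. You reduce it to smoothness of $G_\tau$, which you propose to obtain by descent along $\pi:\tilde G_\tau\to G_\tau$; but descent of smoothness requires $\pi$ to be flat (or smooth) --- surjectivity from a smooth source is not enough (think of the normalization of a nodal curve) --- and you explicitly leave unproved the key point, namely that the Kronecker decomposition can be made algebraic in families so as to produce local sections of $\pi$. As written, the smoothness of $G_\tau$, and hence of your $\mathcal{M}_\tau$, is not established. The paper does not take this route at all: it obtains smoothness by invoking the result of \cite{Ramella}, i.e.\ the description of maps $\PP^1\to\PP^s$ with fixed splitting type of $f^\ast T_{\PP^s}$ by extension classes in $Ext^1(\bigoplus_{i=1}^s\OO_{\PP^1}(a_i),\OO_{\PP^1})$, and only supplies an independent argument for irreducibility and rationality. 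To repair your write-up you should either complete the family version of the Kronecker normal form, or simply cite this external result as the paper does. A secondary slip: the fibres $V_T$ of $\pi$ are open subsets of $\PP(F_{T,b_1})\times\cdots\times\PP(F_{T,b_r})$ with $F_{T,b}=\partial^{-b}T$, whose factor dimensions are $\dim\PP(F_{T,b})=a+\sum_{b_i\geq b}(b_i-b+1)$ (summing to the quantity in formula (\ref{eq:dimVT})); they are not open in $\PP^{b_1}\times\cdots\times\PP^{b_r}$. This does not affect the constancy of the fibre dimension, but the identification as you state it is incorrect (e.g.\ for type $(-1,1)$ the fibre is a single point, not an open subset of $\PP^1$).
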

\begin{proof} We only add some remarks on the smoothness, irreducibility and  rationality of the varieties of maps as above. This is a consequence of the parametrization of maps $\PP^1\to\PP^s$ by elements of a suitable Ext group, and a complete proof of this fact can be found in \cite{Ramella}. However we will give an independent proof of the irreducibility and rationality of these varieties that arises from the explicit parametrization of them. 

Fixing a map $f:\PP^1\to \PP^{s}$ amounts to fixing a subspace $T'\subset S^dU^\ast$ with $\dim T'=d-e$ and a basis $(f_0,\ldots,f_{s})$ of it. Fixing $T'$ is equivalent to fixing the $e+1$ dimensional annihilator $T=(T')^\perp\subset S^dU$ of $T'$. The condition that $\T$ has the form (\ref{eq:tangent}) has been shown to be equivalent to $T$ being of type $\tau=(-1,b_1,\ldots,b_r)$. Hence the space of maps under consideration is a principal $PGL(d-e)$ bundle on $G_\tau$,  which is a irreducible rational variety since $PGL(d-e)$ and $G_\tau$ are irreducible and rational, the second one by Theorem \ref{thm:main2}.  \end{proof}
\section{Concluding remarks}
From the results of the previous section we obtain the following procedure for constructing all parametrized rational curves $\PP^1\stackrel{f}\to C\subset\PP^s$ of degree $d$, with $s=d-e-1<d$ and restricted tangent bundle $\T_f=\bigoplus \OO_{\PP^1}(a_i)$, with $a_1\geq a_2\geq\cdots\geq a_s\geq d+1$ and $\sum a_i=(d-e)d$. Set $b_i=a_i-d-2$ for any $i$ such that $a_i\geq d+2$. In this way we obtain $b_1\geq \cdots\geq b_r\geq 0$ with $r=\max\{i\ |\ a_i\geq d+2\}$. Take any forms $f_i\in S^{d+b_i}U$, with $[f_i]\not\in Sec^{b_i}C_{d+b_i}$ for any $i=1,\ldots,r$ and  such that the sum $\partial^{b_1+1}(f_1)+\cdots+\partial^{b_r+1}(f_r)$ is direct in $S^{d-1}U$. This is equivalent to imposing that the dimension of this latter space is the maximum possible, i.e. equal to $\sum_{i=1}^r(b_i+2)$ and it amounts to impose that the matrix of coefficients of the $\partial_x^{b_i-j}\partial_y^jf_i$'s with respect to of the standard monomial basis $x^iy^j$ avoids a well defined determinantal locus. Then one sets $$T=\partial^{b_1}(f_1)+\cdots+\partial^{b_r}(f_r)\subset S^dU.$$ By the results of the preceding sections one sees that $T$ has type $(-1,b_1,\ldots,b_r)$. Now consider the curve $C\subset\PP^s=\PP(S^dU/T)$ obtained as projection of the rational normal curve $C_d=\nu_d(\PP^1)\subset \PP^d=\PP(S^dU)$ from the vertex $\PP(T)$. Then $C$ has restricted tangent bundle isomorphic to $\bigoplus_{i=1}^s \OO_{\PP^1}(a_i)$.
Moreover any other rational curve with this property is obtained from such a $C$ after a projective transformation of $\PP^s$.
\subsection{Future developments.} In this article we did not touch the problem of studying the Hilbert schemes of rational curves $C\subset \PP^s$ with a given decomposition of the normal bundle $\N_C$ as a vector bundle on $\PP^1$, see for example \cite{Bernardi} and \cite{Ran} for some results on the problem in a general projective space $\PP^s$ and \cite{Ghione}, \cite{Sacchiero} and \cite{Eisen} for the case of rational curves in $\PP^3$. 

We defer a detailed study of the above problem, by using the techniques of this article, to a future paper in preparation.
\subsection*{Acknowledgements} We wish to express our gratitude  to Giorgio Ottaviani and Francesco Russo for the many pleasant and stimulating discussions during the development of this work. We also thank an anonimous referee for a very careful revision of the paper.

\end{document}